\def\NZQ{\mathbb}               
\def\NN{{\NZQ N}}
\def\QQ{{\NZQ Q}}
\def\ZZ{{\NZQ Z}}
\def\RR{{\NZQ R}}
\newtheorem{Theorem}{Theorem}[section]
\newtheorem{Corollary}[Theorem]{Corollary}
\newtheorem{Proposition}[Theorem]{Proposition}
\newtheorem{Remark}[Theorem]{Remark}
\let\epsilon\varepsilon
\let\phi=\varphi
\let\kappa=\varkappa
\begin{document}

\title{Asymptotic growth of saturated powers and epsilon multiplicity}
\author{Steven Dale Cutkosky}
\thanks{Partially supported by NSF}

\address{Steven Dale Cutkosky, Department of Mathematics,
University of Missouri, Columbia, MO 65211, USA}
\email{cutkoskys@missouri.edu}


\maketitle
\section{Introduction}

In this paper, we study the growth of saturated powers of
modules. In the case of an ideal $I$ in a local ring $(R,\mathfrak m)$, the saturation of $I^k$ in $R$ is
$$
(I^k)^{\rm sat}=I^k:_{R}\mathfrak m^{\infty}=\cup_{n=1}^{\infty}I^k:_R\mathfrak m^n.
$$
There are examples showing that the algebra of saturated powers of $I$, 
$\bigoplus_{k\ge 0}(I^k)^{\rm sat}$ is not a finitely generated $R$-algebra;
for instance, in many cases the saturated powers are the symbolic powers.
As such, it cannot be expected that the ``Hilbert function'',
giving the length of the $R$-module $(I^k)^{\rm sat}/I^k$, is very well behaved for large $k$.  However, it can be shown that it is bounded above by a polynomial in $k$ of degree $d$, where $d$ is the dimension of $R$.
We show that in many cases, there is a reasonable asymptotic behavior of this length.

Suppose that $(R,\mathfrak m)$ is a Noetherian local domain of dimension $d\ge 1$. Let $L$ be the quotient field of $R$. Let $\lambda(M)$ denote the length of an $R$-module $M$. Let $F$ be a finitely generated free
$R$-module, and let $E$ be a submodule of $F$ of rank $e$. Let $S=R[F]=\mbox{Sym}(F)=\bigoplus_{k\ge 0}F^k$ and let $R[E]=\bigoplus_{k\ge 0}E^k$ be the $R$-subalgebra of $S$ generated by $E$. Let
$$
E^k:_{F^k}\mathfrak m^{\infty}=\cup_{n=1}^{\infty}E^k:_{F^k}\mathfrak m^n
$$
denote the saturation of $E^k$ in $F^k$. We prove the following theorem:

\begin{Theorem}\label{ThmInt}  Suppose that $(R,\mathfrak m)$ is a local domain of depth $\ge 2$ which is essentially of finite type over a field $K$ of characteristic zero (or over a perfect field $K$ such that $R/\mathfrak m$ is algebraic over $K$). Let $d$ be the dimension of $R$. Suppose that $E$ is a rank $e$ submodule of a finitely generated free $R$-module $F$. Then the limit
\begin{equation}\label{eqInt1}
\lim_{k\rightarrow \infty}\frac{\lambda(E^k:_{F^k}\mathfrak m^{\infty}/E^k)}{k^{d+e-1}}\in \RR
\end{equation}
exists.
\end{Theorem}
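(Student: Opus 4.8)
The plan is to reinterpret the length in \eqref{eqInt1} as, up to an error of lower order in $k$, the $R/\mathfrak m$-dimension of the $k$-th piece of a graded family of linear series on a resolution (or alteration) of the blowup of the module $E$, and then to produce the limit by the method of Okounkov bodies.

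First I would rewrite $\ell_k:=\lambda(E^k:_{F^k}\mathfrak m^\infty/E^k)=\lambda(H^0_{\mathfrak m}(F^k/E^k))$ and use the depth hypothesis. Since $F$ is free and $\operatorname{depth}R\ge 2$, each $F^k=\operatorname{Sym}^k F$ is free of depth $\ge 2$, so $H^0_{\mathfrak m}(F^k)=H^1_{\mathfrak m}(F^k)=0$; hence $F^k=\Gamma(U,\widetilde{F^k})$ over the punctured spectrum $U=\operatorname{Spec}R\setminus\{\mathfrak m\}$, and therefore $E^k:_{F^k}\mathfrak m^\infty=\Gamma(U,\widetilde{E^k})$. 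As $E^k$ is torsion-free, $\operatorname{Ass}(E^k)\subseteq\operatorname{Ass}(F^k)=\{0\}$, so in fact $\ell_k=\lambda(H^1_{\mathfrak m}(E^k))$, and the graded ring $N_\bullet=\bigoplus_k\Gamma(U,\widetilde{E^k})$ sits between $\bigoplus_kE^k$ and $\operatorname{Sym}(F)$ with $\ell_k=\lambda(N_k/E^k)$. This is the only use of depth $\ge 2$: it converts the $\mathfrak m$-adic saturation into sections over $U$. I would then reduce to a geometrically convenient ring: as $R$ is excellent with the stated residue-field behaviour, one may replace $R$ by (a localization of) its normalization and then by its completion, noting that completion commutes with $H^i_{\mathfrak m}$ and with forming the $E^k$, while normalization alters the sheaves only along primes other than $\mathfrak m$, so that $\ell_k$ changes by $o(k^{d+e-1})$. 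Thus I may assume $R$ analytically irreducible and, crucially, that resolution of singularities in characteristic zero (or a suitable alteration in the perfect-field case) is available for schemes of finite type over $R$ --- this is exactly where the hypothesis on $K$ enters.

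Next I would build the model. Let $\mathcal B=\operatorname{Proj}_R(\bigoplus_{k\ge0}E^k)\subseteq\mathbb P(F)=\mathbb P^{\,r-1}_R$ ($r=\operatorname{rank}F$) with $\mathcal O_{\mathcal B}(1)$ restricted from the tautological bundle, let $\mathcal B'$ be the component dominating $\operatorname{Spec}R$ --- of dimension $d+e-1$, since its generic fibre is $\mathbb P^{\,e-1}_L$ as $E\otimes L\subseteq L^r$ has dimension $e$ --- and let $\phi\colon Y\to\mathcal B'$ be a resolution (or alteration); write $\mathcal L=\mathcal O_Y(1)$ and $Y_0=\phi^{-1}(\{\mathfrak m\})$. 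A standard analysis, of the type used in the theory of $\varepsilon$-multiplicity, then shows that, modulo a defect growing as a polynomial in $k$ of degree $<d+e-1$ (bounded using normality of $Y$ and properness of $Y\to\operatorname{Spec}R$), $\ell_k=\lambda(N_k/E^k)$ equals the $R/\mathfrak m$-dimension of the $k$-th term $W_k$ of a graded family of subspaces $W_k\subseteq H^0(Y,\mathcal L^k\otimes\mathcal O_Y(kG))$, where $G$ is a fixed effective divisor supported on $Y_0$ arising from the blow-up, the subspace being singled out by a divisorial valuation $\nu$ of the function field of $Y$ whose centre lies on $Y_0$. The dimension count thus takes place on the $(d+e-1)$-dimensional $Y$, and the module case $e>1$ differs from the ideal case $e=1$ only in that the $(e-1)$ ``fibre directions'' of $\mathbb P(F)$ are carried alongside the $d$ ``base directions'' of $\operatorname{Spec}R$.

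Finally I would conclude via Okounkov bodies. Choose a rank-$(d+e-1)$ valuation refining $\nu$, from a complete flag of subvarieties through a general point of the central fibre of a model of $Y$, and form the graded semigroup $\Gamma=\{(\operatorname{val}(s),k):0\ne s\in W_k\}\subseteq\ZZ^{d+e-1}\times\NN$. One must verify that $\Gamma$ generates a subgroup of rank $d+e$ --- using the domain hypothesis, the residue-field condition (after a harmless flat base change enlarging $R/\mathfrak m$, so that the residue extensions of the chosen valuation, which carry no inseparability obstruction, supply the missing coordinate directions), and the presence of a fixed semiample section in each large power --- and that $\Gamma$ meets the boundedness requirement of Okounkov's semigroup lemma. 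That lemma (equivalently Khovanskii's theorem on graded subsemigroups) then gives $\lim_k\#(\Gamma\cap(\ZZ^{d+e-1}\times\{k\}))/k^{d+e-1}=\operatorname{vol}(\Delta)$ for the associated Newton--Okounkov body $\Delta$; since this count equals $\dim_{R/\mathfrak m}W_k$, combining with the previous step yields the existence of the limit in \eqref{eqInt1}. The main obstacle is the middle step: carrying out the passage from the intrinsic length $\lambda(H^0_{\mathfrak m}(F^k/E^k))$ to a clean graded linear series on $Y$ with all exceptional and discrepancy contributions controlled to $o(k^{d+e-1})$, and --- tightly linked to it --- establishing the maximal-rank property of $\Gamma$, without which the semigroup methods yield only $\liminf$ and $\limsup$ bounds rather than the limit.
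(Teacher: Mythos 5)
Your overall strategy (convert the saturation length into a graded family of sections on a birational model and extract the limit from a Newton--Okounkov semigroup) is a genuinely different route from the paper, which never uses Okounkov bodies: the paper works on $Y=\mbox{Proj}(R[\mathfrak m E])$, identifies $E^k:_{F^k}\mathfrak m^{\infty}=\Gamma(Y,\mathcal M^{-k\tau}\otimes\mathcal L^k)$ (Proposition \ref{Prop6*}), spreads out and compactifies to $\overline Y$, and obtains the limit from Fujita's approximation theorem via \cite{F2} and \cite{L}, writing the relevant count as a difference $h^0(\overline Y,\mathcal B^k)-h^0(\overline Y,\mathcal C^k)$ with $\mathcal B$ big and $\mathcal C$ semiample and big, the error terms being controlled by Serre finiteness and Hilbert--Serre. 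Your route is viable in principle, but as written it has gaps at exactly the decisive points. First, your ``fixed effective divisor $G$ supported on $Y_0$'' with twist $kG$ silently presupposes a uniform \emph{linear} bound on the saturation exponent, i.e.\ $E^k:_{F^k}\mathfrak m^{\infty}=E^k:_{F^k}\mathfrak m^{k\tau}$ for a single $\tau$ independent of $k$; this is not automatic and is precisely what the paper imports from Katz--McAdam \cite{KM} and Swanson \cite{S} applied to $ES\subset\mbox{Sym}(F)$. Without it the ``defect'' in your middle step cannot be capped by any object of linear growth. Second, $\ell_k$ is the length of a quotient of two modules, not the dimension of a single space of sections; in any version of this argument it appears as a difference of two section counts (this is why the paper needs both $\mathcal B$ and $\mathcal C$, and a vanishing statement for $h^1(\mathcal C^k)$), so a single graded family $W_k$ ``singled out by a divisorial valuation'' does not compute $\ell_k$ as stated. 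This is repairable inside the semigroup framework (prove two limits and subtract), but your proposal does not set it up, and you yourself flag this step, together with the maximal-rank property of the semigroup, as unproven; that is the heart of the proof, not a routine verification.

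The preliminary reduction is also not harmless as claimed. Passing to the completion preserves $\ell_k$, but the assertion that normalization ``alters the sheaves only along primes other than $\mathfrak m$'' is wrong: $R'/R$ is supported on the non-normal locus, which may contain $\mathfrak m$ and be positive-dimensional, so $E^kR'/E^k$ need not even have finite length, and comparing the saturation of $E^k$ computed in $F^k$ over $R$ with the saturation computed over $R'$ is exactly the delicate comparison the paper carries out in Section \ref{Exts}; there the analogous passage (to $\overline R=\Gamma(X\setminus\{\mathfrak m\},\mathcal O_X)$) costs a term of size $\lambda(\overline R/R)\binom{k+\gamma-1}{\gamma-1}$, which is only $o(k^{d+e-1})$ under the extra hypothesis $\gamma=\mbox{rank}(F)<d+e$ (Corollary \ref{Cor4*}). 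So declaring this change to be $o(k^{d+e-1})$ in general is a genuine gap, not a formality. (Note also that the paper needs no resolution of singularities at all -- only a compactification, extension of the line bundles, and Fujita approximation, with Takagi's theorem \cite{Ta} covering the perfect-field positive-characteristic case -- so the hypotheses on $K$ enter quite differently than in your sketch.)
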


The conclusions of this theorem follow from Theorem \ref{Theorem1*} and  Remark \ref{charp}.

Theorem \ref{ThmInt} is proven in the case when $E=I$ is a homogeneous ideal and $R$ is a  standard graded normal $K$-algebra in our paper \cite{CHST} with H\`a, Srinivasan and Theodorescu. The theorem is proven with the additional assumptions that $R$ is regular, $E=I$ is an ideal in $F=R$, and the singular locus of $\mbox{Spec}(R/I)$ is $\mathfrak m$ in our paper \cite{CHS} with Herzog and Srinivasan.  Kleiman \cite{K} has proven Theorem \ref{ThmInt} in the case that $E$ is a direct summand of $F$ locally at every nonmaximal prime of $R$. The theorem is proven for $E$ of low analytic deviation in \cite{CHS}, for the case of ideals, and by Ulrich and Validashti \cite{UV}  for the case of modules; in the case of low analytic deviation, the limit is always zero. A generalization of this problem to the case of saturations with respect to non $\mathfrak m$-primary ideals is investigated by Herzog, Puthenpurakal and Verma in \cite{HPV}; they show that an appropriate limit exists for monomial ideals.

An example in \cite{CHST} shows that even in the case when $E$ is an ideal $I$ in a regular local ring $R$, the  limit may be irrational.

An important technique in the proof of Theorem \ref{ThmInt} is to use a theorem of Lazarsfeld \cite{L} showing that the volume of a line bundle on a complex projective variety can be expressed as a limit of numbers of global sections of powers of the line bundle;
Lazarsfeld's theorem is deduced from an approximation theorem of Fujita \cite{F2} (generalizations of Fujita's result to positive characteristic are given in \cite{Ta} and \cite{RM}).

We can interpret our results in terms of local cohomology. Let $F_L^k=F^k\otimes_R L$, where $L$ is the quotient field of $R$, so that we have natural
embeddings $E^k\subset F^k\subset F_L^k$ for all $k$. We have identities
$$
H^0_{\mathfrak m}(F^k/E^k)\cong E^k:_{F^k}\mathfrak m^{\infty}/E^k
\mbox{ and }H^1_{\mathfrak m}(E^k)\cong E^k:_{F_L^k}\mathfrak m^{\infty}/E^k.
$$
Further, these two modules are equal if $R$ has depth $\ge 2$.

We thus obtain the following corollary to Theorem \ref{ThmInt}, which shows that the epsilon multiplicity
$\epsilon(E)$ of a module, defined as a limsup in \cite{UV}, actually exists as a limit.

\begin{Corollary}\label{CorInt} Suppose that $(R,\mathfrak m)$ is a local domain of depth $\ge 2$ which is essentially of finite type over a field $K$ of characteristic zero
(or over a perfect field $K$ such that  $R/\mathfrak m$ is algebraic over $K$). Let $d$ be the dimension of $R$. Suppose that $E$ is a rank $e$ submodule of a finitely generated free $R$-module $F$. Then the limit
$$
\lim_{k\rightarrow \infty}\frac{(d+e-1)!}{k^{d+e-1}}\lambda(H^0_{\mathfrak m}(F^k/E^k))\in \RR
$$
exists. Thus the epsilon multiplicity $\epsilon(E)$ of $E$ exists as a limit.
\end{Corollary}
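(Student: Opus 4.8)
The plan is to obtain Corollary~\ref{CorInt} directly from Theorem~\ref{ThmInt}; the only extra ingredients are the standard identification of $H^0_{\mathfrak m}$ with a saturation and the definition of the epsilon multiplicity from \cite{UV}. First I would record the local cohomology identity: for any finitely generated $R$-module $M$ one has $H^0_{\mathfrak m}(M)=\{x\in M:\mathfrak m^nx=0\text{ for some }n\ge 1\}=0:_M\mathfrak m^\infty$. Applying this with $M=F^k/E^k$, which is a finitely generated $R$-module, gives
\[
H^0_{\mathfrak m}(F^k/E^k)\cong (E^k:_{F^k}\mathfrak m^\infty)/E^k,
\]
which is exactly the module appearing in \eqref{eqInt1}. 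Since this module is a submodule of a finitely generated module over the Noetherian ring $R$ and $\mathfrak m$ acts nilpotently on it, it is both Noetherian and Artinian, hence of finite length, so $\lambda(H^0_{\mathfrak m}(F^k/E^k))<\infty$ for every $k$ and the displayed expression makes sense.

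Next I would invoke Theorem~\ref{ThmInt}, whose hypotheses are precisely those of the corollary: it gives that
\[
\ell:=\lim_{k\to\infty}\frac{\lambda(E^k:_{F^k}\mathfrak m^\infty/E^k)}{k^{d+e-1}}
\]
exists in $\RR$. Substituting the identity of the previous paragraph and multiplying through by the constant $(d+e-1)!$, it follows that
\[
\lim_{k\to\infty}\frac{(d+e-1)!}{k^{d+e-1}}\lambda\bigl(H^0_{\mathfrak m}(F^k/E^k)\bigr)=(d+e-1)!\,\ell
\]
exists in $\RR$, which is the first assertion.

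For the last sentence, I would recall that in \cite{UV} the epsilon multiplicity $\epsilon(E)$ is defined (with exactly the normalization displayed above) as the $\limsup$ of the sequence $\tfrac{(d+e-1)!}{k^{d+e-1}}\lambda(H^0_{\mathfrak m}(F^k/E^k))$; since we have just shown this sequence converges, its $\limsup$ coincides with its limit, so $\epsilon(E)$ exists as a genuine limit and equals $(d+e-1)!\,\ell$. There is no real obstacle internal to the corollary — all the difficulty lies in Theorem~\ref{ThmInt} itself, which I am entitled to assume — and the only points needing attention are the finiteness of $\lambda(H^0_{\mathfrak m}(F^k/E^k))$ and checking that the normalizing exponent $d+e-1$ used here is the one built into the definition of $\epsilon(E)$ in \cite{UV}.
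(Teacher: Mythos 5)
Your proposal is correct and follows essentially the same route as the paper: the paper's proof is exactly the identification $H^0_{\mathfrak m}(F^k/E^k)\cong (E^k:_{F^k}\mathfrak m^{\infty})/E^k$ (its equation (\ref{eqI'}), which you obtain directly from the torsion description of $H^0_{\mathfrak m}$ rather than from the local cohomology sequence) combined with Theorem \ref{ThmInt} and the observation that a convergent sequence has its $\limsup$ equal to its limit. No gaps.
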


 By the above identities of
local cohomology, we see that (\ref{eqInt1}) is equivalent to the statement that
\begin{equation}\label{eqInt2}
\lim_{k\rightarrow\infty}\frac{H^0_{\mathfrak m}(F^k/E^k)}{k^{d+e-1}}
=\lim_{k\rightarrow\infty}\frac{H^1_{\mathfrak m}(E^k)}{k^{d+e-1}}\in \RR
\end{equation}
exists when $\mbox{depth}(R)\ge 2$.

In Section \ref{Exts}, we extend our results to domains of dimension $d\ge 2$.
We prove the following extension of Theorem \ref{ThmInt}, which shows that the second limit of (\ref{eqInt2}),
$$
\lim_{k\rightarrow\infty}\frac{H^1_{\mathfrak m}(E^k)}{k^{d+e-1}}\in \RR
$$
exists when $R$ is a domain of dimension $d\ge 2$.

\begin{Theorem}\label{Theorem3**} Suppose that $(R,\mathfrak m)$ is a  local domain of
  dimension $d\ge 2$
 which is essentially of finite type over a field $K$ of characteristic zero
 (or over a perfect field $K$ such that $R/\mathfrak m$ is algebraic over $K$).
Suppose that $E$ is a rank $e$ submodule of a finitely generated free $R$-module $F$.
Then the limit
\begin{equation}\label{**eq1}
\lim_{k\rightarrow \infty}\frac{\lambda\left(E^k:_{F_L^k}\mathfrak m^{\infty}/E^k\right)}{k^{d+e-1}}\in\RR
\end{equation}
exists.
\end{Theorem}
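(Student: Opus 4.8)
The plan is to deduce Theorem~\ref{Theorem3**} from Theorem~\ref{ThmInt}, trading the hypothesis $\mathrm{depth}(R)\ge 2$ there for the present $\dim R\ge 2$, and absorbing the difference into an auxiliary graded module with eventually polynomial Hilbert function. First I would record two elementary facts. Since $E^k$ is torsion free over the domain $R$ one has $H^0_{\mathfrak m}(E^k)=0$ and $E^k:_{F_L^k}\mathfrak m^\infty=\bigcap_{\mathfrak p\ne\mathfrak m}(E^k)_{\mathfrak p}$, the intersection being taken inside $F_L^k$ over the non-maximal primes $\mathfrak p$; thus the length in (\ref{**eq1}) depends only on $E$ localized away from $\mathfrak m$, and $\bigoplus_k H^1_{\mathfrak m}(E^k)=H^1_{\mathfrak m}(R[E])$ as a graded module over the Rees algebra $R[E]=\bigoplus_k E^k$. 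Moreover each $H^1_{\mathfrak m}(E^k)$ has finite length: by Faltings' finiteness criterion this holds because $\mathrm{depth}((E^k)_{\mathfrak p})+\dim(R/\mathfrak p)\ge 1+1=2$ for every $\mathfrak p\ne\mathfrak m$, $E^k$ being torsion free over $R$, so (\ref{**eq1}) is a well posed statement.

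Next I would pass to the $S_2$-ification. Since $R$ is excellent, its $S_2$-ification $\widetilde R\subseteq L$ is module finite over $R$; it is a semilocal domain with maximal ideals $\widetilde{\mathfrak m}_1,\dots,\widetilde{\mathfrak m}_r$ lying over $\mathfrak m$, each $\widetilde R_{\widetilde{\mathfrak m}_i}$ is a local domain essentially of finite type over $K$, of dimension $d$ and of depth $\ge\min(2,d)=2$, with residue field finite over $R/\mathfrak m$. Put $\widetilde E=E\widetilde R$, a rank $e$ submodule of the finitely generated free $\widetilde R$-module $F\otimes_R\widetilde R$, and $\widetilde E^k=E^k\widetilde R=(\widetilde E)^k$, so $\widetilde R[E]=\bigoplus_k\widetilde E^k$ is module finite over $R[E]$. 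Since $H^1_{\mathfrak m\widetilde R}(\widetilde E^k)\cong\bigoplus_{i}H^1_{\widetilde{\mathfrak m}_i}\bigl((\widetilde E^k)_{\widetilde{\mathfrak m}_i}\bigr)$ and the $R$-length of the $i$-th summand is $[\widetilde R/\widetilde{\mathfrak m}_i:R/\mathfrak m]$ times its $\widetilde R_{\widetilde{\mathfrak m}_i}$-length, applying Theorem~\ref{ThmInt} in the form (\ref{eqInt2}) — legitimate because $\mathrm{depth}(\widetilde R_{\widetilde{\mathfrak m}_i})\ge 2$, with Remark~\ref{charp} for the perfect field case — to each pair $(\widetilde R_{\widetilde{\mathfrak m}_i},\widetilde E_{\widetilde{\mathfrak m}_i})$ shows that $\lim_k\lambda\bigl(H^1_{\mathfrak m\widetilde R}(\widetilde E^k)\bigr)/k^{d+e-1}$ exists, where $H^1_{\mathfrak m\widetilde R}(\widetilde E^k)=\widetilde E^k:_{F_L^k}(\mathfrak m\widetilde R)^\infty/\widetilde E^k$.

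It then remains to compare $H^1_{\mathfrak m}(E^k)$ with $H^1_{\mathfrak m\widetilde R}(\widetilde E^k)$, and the case $d=2$ is immediate. A one dimensional local domain is already $S_2$, so $\widetilde R_{\mathfrak p}=R_{\mathfrak p}$, hence $(\widetilde E^k)_{\mathfrak p}=(E^k)_{\mathfrak p}$, at every prime $\mathfrak p$ of height $\le 1$; since all non-maximal primes of $R$ have height $\le 1$ when $d=2$, both $E^k:_{F_L^k}\mathfrak m^\infty$ and $\widetilde E^k:_{F_L^k}(\mathfrak m\widetilde R)^\infty$ equal $\bigcap_{\mathfrak p\ne\mathfrak m}(E^k)_{\mathfrak p}$. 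As $E^k\subseteq\widetilde E^k$ sit inside this common module, $\lambda\bigl(H^1_{\mathfrak m}(E^k)\bigr)=\lambda\bigl(H^1_{\mathfrak m\widetilde R}(\widetilde E^k)\bigr)+\lambda\bigl(\widetilde E^k/E^k\bigr)$, and $\widetilde E^k/E^k$ has finite length, its support being contained in that of $\widetilde R/R$, namely $\{\mathfrak m\}$. For the last term, $\bigoplus_k\widetilde E^k/E^k$ is a finitely generated graded $R[E]$-module annihilated by a power of $\mathfrak m$, hence a finitely generated graded module over the standard graded algebra $R[E]/\mathfrak m^N R[E]$ over the Artinian ring $R/\mathfrak m^N$, so $k\mapsto\lambda(\widetilde E^k/E^k)$ agrees for $k\gg 0$ with a polynomial of degree $\le d+e-1$ and $\lim_k\lambda(\widetilde E^k/E^k)/k^{d+e-1}$ exists. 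Adding the two limits proves Theorem~\ref{Theorem3**} for $d=2$.

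The main obstacle will be the case $d\ge 3$: there $\widetilde R$ can differ from $R$ along a positive dimensional closed subset of the punctured spectrum, so $\widetilde E^k/E^k$ need not have finite length, $(\widetilde E^k)_{\mathfrak p}\ne(E^k)_{\mathfrak p}$ for some non-maximal $\mathfrak p$, and the corrections above become infinite. To handle this I would either carry out a finer analysis, stripping off degree by degree the finite length parts of the graded modules $H^i_{\mathfrak m}(\widetilde E^k)$, $H^i_{\mathfrak m}(\widetilde E^k/E^k)$ and of the connecting homomorphisms between them and showing the infinite contributions cancel in pairs, leaving a finite length error controlled by a finitely generated graded $R[E]$-module as above; or else imitate the geometric proof of Theorem~\ref{Theorem1*}: resolve the singularities of the $(d+e-1)$-dimensional projective $R$-scheme $\mathrm{Proj}(R[E])$ (Hironaka in characteristic zero, the substitutes of \cite{Ta} and \cite{RM} over a perfect field), pull back $\mathcal{O}(1)$ to a line bundle on the resolution, express $\lambda(H^1_{\mathfrak m}(E^k))$ up to a lower order term as the dimension of a space of global sections of that line bundle twisted by a fixed $\mathbb{Q}$-divisor supported over $\mathfrak m$, and conclude with Lazarsfeld's volume theorem \cite{L} via Fujita approximation \cite{F2}. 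In either approach the crux, and the reason Theorem~\ref{ThmInt} alone does not suffice, is to show that this $d\ge 3$ correction has order at most $k^{d+e-1}$ with a well defined leading coefficient — precisely the point where the asymptotic volume machinery is indispensable.
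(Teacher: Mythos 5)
There is a genuine gap: your argument is complete only when $d=2$, and for $d\ge 3$ you explicitly leave the main point unproved, offering only two unexecuted strategies (a ``finer analysis'' of local cohomology modules, or redoing the geometric volume argument). The obstacle you run into is an artifact of your choice of auxiliary ring. You pass to the $S_2$-ification $\widetilde R$, which imposes depth conditions at \emph{all} primes of height $\ge 2$ and therefore can differ from $R$ along a positive-dimensional subset of the punctured spectrum when $d\ge 3$; this is exactly why your correction terms $\widetilde E^k/E^k$ can fail to have finite length. The paper instead takes $\overline R=\Gamma(X\setminus\{\mathfrak m\},\mathcal O_X)=\cap_{\mathfrak p\in X\setminus\{\mathfrak m\}}R_{\mathfrak p}$, which is a finite $R$-module by the Finiteness Theorem (SGA~2), as in (\ref{eqG}). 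This ring agrees with $R$ at every non-maximal prime, so $\overline R/R$, and likewise each $\overline E^k/E^k$ with $\overline E=\overline RE$, is $\mathfrak m$-torsion; since $\overline R[\overline E]/R[E]$ is a finitely generated $R[E]$-module, a single power $\mathfrak m^r$ kills all $\overline E^k/E^k$, and the Hilbert--Serre theorem bounds $\lambda(\overline E^k/E^k)$ by a constant times $k^{d+e-2}$, in every dimension $d\ge 2$. Moreover $H^0_{\mathfrak m}(\overline R)=H^1_{\mathfrak m}(\overline R)=0$ by construction, so each localization $\overline R_{\mathfrak m_i}$ at a maximal ideal over $\mathfrak m$ has depth $\ge 2$, and Theorem \ref{ThmInt} applies to $(\overline R_{\mathfrak m_i},\overline E_{\mathfrak m_i})$. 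Combining this with the identifications $E^k:_{F_L^k}\mathfrak m^{\infty}=\cap_i(E^k)_{u_i}=E^k:_{\overline F^k}\mathfrak m^{\infty}=\overline E^k:_{\overline F^k}\mathfrak m^{\infty}$ from (\ref{eqE}) and (\ref{eqE'}), and the exact sequence $0\rightarrow \overline E^k/E^k\rightarrow E^k:_{\overline F^k}\mathfrak m^{\infty}/E^k\rightarrow \overline E^k:_{\overline F^k}\mathfrak m^{\infty}/\overline E^k\rightarrow 0$, gives Theorem \ref{Theorem3**} (this is Theorem \ref{Theorem3*} plus the displayed identifications).

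In short, your $d=2$ argument is essentially the paper's argument, and it works verbatim in all dimensions $d\ge 2$ once $\widetilde R$ is replaced by $\overline R$: the needed properties are only that the finite birational extension be isomorphic to $R$ away from $\mathfrak m$ and have depth $\ge 2$ at its maximal ideals, not the full $S_2$ condition. Your closing claim that the $d\ge 3$ case requires new volume-theoretic input beyond Theorem \ref{ThmInt} is therefore not correct; no additional Fujita--Lazarsfeld machinery is needed at this stage.
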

Theorem \ref{Theorem3**} follows from Theorem \ref{Theorem3*} and equations (\ref{eqE}) and (\ref{eqE'}).
We prove that the first limit of (\ref{eqInt2}),
$$
\lim_{k\rightarrow\infty}\frac{H^0_{\mathfrak m}(F^k/E^k)}{k^{d+e-1}}\in\RR
$$
exists when $R$
is a  domain of dimension $d\ge 2$ and $E$ is embedded in $F$ of rank $<d+e$. I thank Craig Huneke, Bernd Ulrich and Javid Validashti for pointing out this interesting consequence of Theorem \ref{Theorem3**}.

\begin{Corollary}\label{Cor4**} Suppose that $(R,\mathfrak m)$ is a  local domain of
  dimension $d\ge 2$
 which is essentially of finite type over a field $K$ of characteristic zero
 (or over a perfect field $K$ such that $R/\mathfrak m$ is algebraic over $K$).
Suppose that $E$ is a rank $e$ submodule of a finitely generated free $R$-module $F$.
Suppose that $\gamma=\mbox{\rm rank}(F)<d+e$.
Then the limits
\begin{equation}\label{**eq2}
\lim_{k\rightarrow \infty}\frac{\lambda\left(E^k:_{F^k}\mathfrak m^{\infty}/E^k\right)}{k^{d+e-1}}\in \RR
\end{equation}
and
\begin{equation}\label{**eq3}
\lim_{k\rightarrow \infty}\frac{(d+e-1)!}{k^{d+e-1}}\lambda(H^0_{\mathfrak m}(F^k/E^k))\in \RR
\end{equation}
exist. In particular,  the epsilon multiplicity $\epsilon(E)$ of $E$ exists as a limit.
\end{Corollary}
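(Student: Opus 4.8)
The plan is to deduce Corollary~\ref{Cor4**} from Theorem~\ref{Theorem3**} by a comparison of the local cohomology modules $H^0_{\mathfrak m}(F^k/E^k)$ and $H^1_{\mathfrak m}(E^k)$; the hypothesis $\gamma=\mbox{rank}(F)<d+e$ is used exactly once, to make an error term of lower order.

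First I would observe that $H^0_{\mathfrak m}(F^k/E^k)\cong E^k:_{F^k}\mathfrak m^{\infty}/E^k$ holds with no restriction on $\mbox{depth}(R)$, so that the limits (\ref{**eq2}) and (\ref{**eq3}) differ only by the factor $(d+e-1)!$, and the assertion about $\epsilon(E)$ is immediate from (\ref{**eq3}) together with the definition of the epsilon multiplicity as a $\limsup$. Thus it suffices to prove that $\lim_{k\to\infty}\lambda\bigl(H^0_{\mathfrak m}(F^k/E^k)\bigr)/k^{d+e-1}$ exists. Since $R$ is a domain, $E^k$ and $F^k$ are torsion-free, so $H^0_{\mathfrak m}(E^k)=H^0_{\mathfrak m}(F^k)=0$, and the long exact local cohomology sequence of $0\to E^k\to F^k\to F^k/E^k\to 0$ gives the exact sequence
\[
0\longrightarrow H^0_{\mathfrak m}(F^k/E^k)\longrightarrow H^1_{\mathfrak m}(E^k)\longrightarrow H^1_{\mathfrak m}(F^k).
\]
Writing $Q_k$ for the image of $H^1_{\mathfrak m}(E^k)$ in $H^1_{\mathfrak m}(F^k)$, we get $\lambda\bigl(H^0_{\mathfrak m}(F^k/E^k)\bigr)=\lambda\bigl(H^1_{\mathfrak m}(E^k)\bigr)-\lambda(Q_k)$. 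By the identities (\ref{eqE}) and (\ref{eqE'}), $\lambda\bigl(H^1_{\mathfrak m}(E^k)\bigr)=\lambda\bigl(E^k:_{F_L^k}\mathfrak m^{\infty}/E^k\bigr)$, whose quotient by $k^{d+e-1}$ converges by Theorem~\ref{Theorem3**}. Hence the problem reduces to showing $\lim_{k\to\infty}\lambda(Q_k)/k^{d+e-1}=0$.

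This is the step that uses $\gamma<d+e$, and it is where I would spend the effort. Since $F$ is free of rank $\gamma$, the module $F^k=\mbox{Sym}^k(F)$ is free of rank $\binom{\gamma+k-1}{\gamma-1}$, so $H^1_{\mathfrak m}(F^k)\cong H^1_{\mathfrak m}(R)^{\binom{\gamma+k-1}{\gamma-1}}$ and therefore
\[
\lambda(Q_k)\ \le\ \lambda\bigl(H^1_{\mathfrak m}(F^k)\bigr)\ =\ \binom{\gamma+k-1}{\gamma-1}\,\lambda\bigl(H^1_{\mathfrak m}(R)\bigr).
\]
Now $\lambda(H^1_{\mathfrak m}(R))$ is finite: $H^1_{\mathfrak m}(R)$ is annihilated by a power of $\mathfrak m$, and it is finitely generated because $R$ is an excellent local domain of dimension $d\ge 2$ --- concretely $H^1_{\mathfrak m}(R)\cong H^0(U,\mathcal O_U)/R$ with $U=\mbox{Spec}(R)\setminus\{\mathfrak m\}$, and $H^0(U,\mathcal O_U)=\bigcap_{\mathfrak p\ne\mathfrak m}R_{\mathfrak p}$ is contained in the $S_2$-ification of $R$, which is module-finite over $R$ because $R$ is excellent. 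Consequently $\lambda(Q_k)=O(k^{\gamma-1})$, and since $\gamma-1<d+e-1$ we obtain $\lambda(Q_k)/k^{d+e-1}\to 0$. Combined with the reduction above, this shows that the limit in (\ref{**eq2}) exists, and in fact equals the limit in Theorem~\ref{Theorem3**}.

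The only ingredient here that goes beyond the formal long exact sequence and a degree count is the finiteness of $\lambda(H^1_{\mathfrak m}(R))$ for an excellent local domain of dimension at least two (or, equivalently, the module-finiteness of its $S_2$-ification); I would invoke this as a known fact rather than reprove it. Everything else, including the verification that the lengths in sight are finite, is routine.
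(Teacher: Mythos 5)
Your argument is correct and is essentially the paper's own proof in local-cohomology dress: your exact sequence $0\to H^0_{\mathfrak m}(F^k/E^k)\to H^1_{\mathfrak m}(E^k)\to Q_k\to 0$ with $Q_k\hookrightarrow H^1_{\mathfrak m}(F^k)\cong H^1_{\mathfrak m}(R)^{\binom{k+\gamma-1}{\gamma-1}}$ is exactly the paper's sequences (\ref{eqA}), (\ref{eqB}) and (\ref{eqP}) under the identifications (\ref{eqI}), (\ref{eqF}), (\ref{eqQ}), with the same use of $\gamma<d+e$ to kill the error term and Theorem \ref{Theorem3**} (equivalently Theorem \ref{Theorem3*}) supplying the main limit. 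The only cosmetic difference is that you justify $\lambda(H^1_{\mathfrak m}(R))<\infty$ via module-finiteness of the $S_2$-ification, where the paper gets the same fact ($\lambda(\overline R/R)<\infty$) from the Finiteness Theorem of \cite{SGAII}; also your citation for $H^1_{\mathfrak m}(E^k)\cong E^k:_{F_L^k}\mathfrak m^{\infty}/E^k$ should be (\ref{eqF'}) rather than (\ref{eqE'}).
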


In the case when $e=1$ and $F=R$, we get the following statement.

\begin{Corollary} Suppose that $(R,\mathfrak m)$ is a local domain of dimension $d\ge 1$ which is essentially of finite type over a field $K$ of characteristic zero
(or over a perfect field $K$ such that  $R/\mathfrak m$ is algebraic over $K$).  Suppose that $I$ is an ideal in $R$. Let 
$(I^k)^{\rm sat}=I^k:_R\mathfrak m^{\infty}$ be the saturation of $I^k$.
Then the limit
$$
\lim_{k\rightarrow \infty}\frac{\lambda((I^k)^{\rm sat}/I^k)}{k^{d}}\in\RR
$$
exists.
\end{Corollary}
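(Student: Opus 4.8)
The plan is to deduce this from Corollary \ref{Cor4**}, after disposing of two degenerate situations by hand. Here $F=R$ is free of rank $\gamma=1$, and, provided $I\neq 0$, the ideal $I$ has rank $e=1$ as an $R$-submodule of $F$ because $R$ is a domain. With these identifications $d+e-1=d$, and $E^k:_{F^k}\mathfrak m^{\infty}=I^k:_R\mathfrak m^{\infty}=(I^k)^{\rm sat}$, so the limit in the statement is precisely the limit (\ref{**eq2}) of Corollary \ref{Cor4**}.

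First I would treat the main case, $I\neq 0$ and $d\geq 2$. Then $\gamma=1<d+1=d+e$, so the hypothesis $\gamma=\mbox{rank}(F)<d+e$ of Corollary \ref{Cor4**} is satisfied, and the existence assertion (\ref{**eq2}) is exactly the claim that $\lim_{k\to\infty}\lambda((I^k)^{\rm sat}/I^k)/k^{d}$ exists.

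It then remains to cover $I=0$ and the case $d=1$, both of which fall outside the scope of Corollary \ref{Cor4**}. If $I=0$ then $(I^k)^{\rm sat}=0:_R\mathfrak m^{\infty}=H^0_{\mathfrak m}(R)$, which vanishes since $R$ is a domain of positive dimension; hence $\lambda((I^k)^{\rm sat}/I^k)=0$ for all $k$ and the limit is $0$. If $d=1$ and $I\neq 0$, then $R/I$ has dimension $0$, so $I$ is $\mathfrak m$-primary; choosing $N$ with $\mathfrak m^N\subseteq I$ gives $\mathfrak m^{Nk}\subseteq I^k$, whence $(I^k)^{\rm sat}=R$ for every $k$. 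Thus $\lambda((I^k)^{\rm sat}/I^k)=\lambda(R/I^k)$, which for $k\gg 0$ agrees with a polynomial in $k$ of degree $1$ whose leading coefficient is the Hilbert--Samuel multiplicity $e(I)$; in particular $\lim_{k\to\infty}\lambda((I^k)^{\rm sat}/I^k)/k=e(I)$ exists.

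I do not expect any serious obstacle: the substance is carried entirely by Corollary \ref{Cor4**} (and behind it Theorem \ref{Theorem3**}), and all that is left is the bookkeeping remark that over the rank-one free module $F=R$ the inequality $\gamma<d+e$ holds automatically once $d\geq 1$, together with the classical one-dimensional computation. The only point that needs a moment's attention is making sure the two degenerate cases are not silently excluded, since Corollary \ref{Cor4**} is stated only for $d\geq 2$ and for $E$ of positive rank.
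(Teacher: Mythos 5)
Your proposal is correct and follows essentially the same route as the paper, which obtains this statement from Corollary \ref{Cor4**} by taking $E=I$, $F=R$, $e=\gamma=1$, so that $\gamma<d+e$ holds automatically. Your explicit treatment of the cases $d=1$ (where $I$ is $\mathfrak m$-primary and the limit is the Hilbert--Samuel multiplicity) and $I=0$ is a welcome addition, since Corollary \ref{Cor4**} is stated only for $d\ge 2$ and the paper leaves these degenerate cases implicit.
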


Asymptotic polynomial like behavior of the length of extension functions is
studied by Katz and Theodorescu \cite{KT}, Theodorescu \cite{T} and Crabbe, Katz, Striuli and Theodorescu \cite{CKST}. 
By the local duality theorem, we obtain the following corollary to Theorem \ref{ThmInt}.

\begin{Corollary} Suppose that $(R,\mathfrak m)$ is a Gorenstein local domain of dimension  $d\ge 2$ which is essentially of finite type over a field $K$ of characteristic zero
(or over a perfect field $K$ such that  $R/\mathfrak m$ is algebraic over $K$).  Suppose that $E$ is a rank $e$ submodule of a finitely generated free $R$-module $F$. Then the limit
$$
\lim_{k\rightarrow \infty}\frac{\lambda({\rm Ext}^d_R(F^k/E^k,R))}
{k^{d+e-1}}\in \RR
$$
exists. 
\end{Corollary}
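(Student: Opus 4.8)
The plan is to transfer the problem to the local cohomology module $H^0_{\mathfrak m}(F^k/E^k)$ by means of the local duality theorem, and then to invoke Theorem \ref{ThmInt}. Since $R$ is Gorenstein of dimension $d$, it is Cohen--Macaulay with canonical module $\omega_R\cong R$, and, being essentially of finite type over a field, it is a homomorphic image of a regular local ring; hence local duality applies. Writing $(-)^{\vee}={\rm Hom}_R(-,E_R(R/\mathfrak m))$ for the Matlis dual and $(-)^{\wedge}$ for $\mathfrak m$-adic completion, for every finitely generated $R$-module $M$ one has a functorial isomorphism $({\rm Ext}^{d}_R(M,R))^{\wedge}\cong (H^0_{\mathfrak m}(M))^{\vee}$, the case $i=0$ of the general statement $({\rm Ext}^{d-i}_R(M,R))^{\wedge}\cong (H^i_{\mathfrak m}(M))^{\vee}$.

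The next step is to pass to lengths. The module $H^0_{\mathfrak m}(M)=\Gamma_{\mathfrak m}(M)$ is a finitely generated $\mathfrak m$-power torsion module, hence of finite length; since Matlis duality preserves the length of a finite length module, $(H^0_{\mathfrak m}(M))^{\vee}$ has length $\lambda(H^0_{\mathfrak m}(M))$, and therefore so does $({\rm Ext}^d_R(M,R))^{\wedge}$. A finitely generated $R$-module has finite length if and only if its $\mathfrak m$-adic completion does, with equal lengths, so $\lambda({\rm Ext}^d_R(M,R))=\lambda(H^0_{\mathfrak m}(M))$. Taking $M=F^k/E^k$ and using the identity $H^0_{\mathfrak m}(F^k/E^k)\cong E^k:_{F^k}\mathfrak m^{\infty}/E^k$ recorded in the Introduction, we obtain
$$
\lambda\bigl({\rm Ext}^d_R(F^k/E^k,R)\bigr)=\lambda\bigl(E^k:_{F^k}\mathfrak m^{\infty}/E^k\bigr)\qquad\text{for all }k\ge 0.
$$

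Finally, a Gorenstein local ring of dimension $d\ge 2$ is Cohen--Macaulay, so ${\rm depth}(R)=d\ge 2$; thus $R$ meets every hypothesis of Theorem \ref{ThmInt} (equivalently of Corollary \ref{CorInt}), and the limit $\lim_{k\to\infty}\lambda(E^k:_{F^k}\mathfrak m^{\infty}/E^k)/k^{d+e-1}$ exists in $\RR$. Dividing the displayed identity by $k^{d+e-1}$ and letting $k\to\infty$ yields the assertion. I do not anticipate a genuine obstacle here: all the real work is contained in Theorem \ref{ThmInt}, and the only mildly delicate point, namely the completion appearing in the statement of local duality, is harmless precisely because every module involved has finite length, so completion changes neither finiteness of length nor the length itself.
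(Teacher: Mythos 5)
Your proof is correct and is exactly the route the paper intends: the corollary is stated there as an immediate consequence of local duality together with Theorem \ref{ThmInt}, which is what you carry out, with the completion and Matlis-duality bookkeeping handled properly since all modules involved have finite length.
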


\section {Preliminaries}\label{Prep}

Suppose that $(R,\mathfrak m)$ is a Noetherian local domain of dimension $d\ge 1$ with quotient field $L$.
Let $\lambda_R(M)$ denote the length of an $R$-module $M$. When there is no danger of confusion, we will
denote $\lambda_R(M)$ by $\lambda(M)$.

Let $F$ be  a finitely generated free $R$-module of rank $\gamma$, and let $E$ be a  submodule of $F$ of rank $e$. 
Let $S=R[F]=\mbox{Sym}(F)=\bigoplus_{k\ge 0}F^k$, and let $R[E]=\bigoplus_{k\ge0}E^k$ be the $R$-subalgebra of $S$ generated by $E$. Let
$$
E^k:_{F^k}\mathfrak m^{\infty} = \cup_{n=1}^{\infty}E^k:_{F^k}\mathfrak m^n
$$
denote the saturation of $E^k$ in $F^k$.

Let $F_L^k=F^k\otimes_R L$ (where $L$ is the quotient field of $R$), so that we have natural
embeddings $E^k\subset F^k\subset F_L^k$ for all $k$. Let $X=\mbox{Spec}(R)$, $\widetilde{E^k}$ be the sheafification of $E$ on $X$ and let $u_1,\ldots,u_s$ be generators of the ideal $\mathfrak m$.

There are identities
\begin{equation}\label{eqE'}
H^0(X\setminus\{\mathfrak m\},\widetilde{E^k})=\cap_{i=1}^s(E^k)_{u_i}=E^k:_{F_L^k}\mathfrak m^{\infty}.
\end{equation}
From the  exact sequence of cohomology groups
$$
0\rightarrow H^0_{\mathfrak m}(E^k)\rightarrow E^k\rightarrow H^0_{\mathfrak m}(X\setminus \{\mathfrak m\},\widetilde{E^k})\rightarrow H^1_{\mathfrak m}(E^k)\rightarrow 0,
$$
we deduce that we have isomorphisms of $R$-modules 
\begin{equation}\label{eqF'}
H^1_{\mathfrak m}(E^k)\cong E^k:_{F_L^k}\mathfrak m^{\infty}/E^k
\end{equation}
for $k\ge 0$. The same calculation for $F^k$ shows that 
\begin{equation}\label{eqQ'}
H^1_{\mathfrak m}(F^k)\cong F^k:_{F_L^k}\mathfrak m^{\infty}/F^k.
\end{equation}

From the left exact local cohomology sequence
$$
0\rightarrow H^0_{\mathfrak m}(F^k/E^k)\rightarrow H^1_{\mathfrak m}(E^k)\rightarrow H^1_{\mathfrak m}(F^k),
$$
we have that
\begin{equation}\label{eqI'}
H^0_{\mathfrak m}(F^k/E^k)\cong \left(E^k:_{F_L^k}\mathfrak m^{\infty})\cap F^k\right)/E^k=E^k:_{F^k}\mathfrak m^{\infty}/E^k.
\end{equation}

From (\ref{eqE'}), and the fact that $F^k$ is a free $R$-module, we have that $H^0(X\setminus \{\mathfrak m\},\widetilde{F^k})=F^k$ and
\begin{equation}\label{eqU}
E^k:_{F_L^k}\mathfrak m^{\infty}=E^k:_{F^k}\mathfrak m^{\infty}\mbox{ if $R$ has depth $\ge 2$.}
\end{equation}

Let $ES$  be the ideal of $S$ generated by $E$.
We compute the degree $n$ part of $(ES)^n$ from the formula
\begin{equation}\label{eq0}
\left[(ES)^n\right]_n=E^n.
\end{equation}

Let $R[\mathfrak mE]=\bigoplus_{n\ge 0}(\mathfrak m E)^n$ be the $R$-subalgebra of $S$ generated by $\mathfrak mE$.

Let $X=\mbox{Spec}(R)$, $Y=\mbox{Proj}(R[\mathfrak mE])$ and
$Z=\mbox{Proj}(R[E])$. 

Write $R[E]=R[\overline x_1,\ldots,\overline x_t]$ as a standard graded $R$-algebra, with 
$\mbox{deg }\overline x_i=1$ for all $i$. 
For $1\le i\le t$, let
$$
R_i=R[\frac{\overline x_1}{\overline x_i},\ldots,\frac{\overline x_t}{\overline x_i}],
$$
and let $V_i=\mbox{Spec}(R_i)$ for $1\le i\le t$.
$\{V_i\}$ is an affine cover of $Z$. Let $u_1,\ldots, u_s$ be generators of the ideal $\mathfrak m$. 
For $1\le i\le s$ and $1\le j\le t$, let
$$
R_{i,j}=R[\frac{u_{\alpha}\overline x_{\beta}}{u_i\overline x_j}\mid 1\le\alpha\le s, 1\le \beta\le t],
$$
and $U_{i,j}=\mbox{Spec}(R_{i,j})$.
Then $\{U_{i,j}\}$ is an affine cover of $Y$. Since
$$
R_j[\frac{u_1}{u_i},\ldots,\frac{u_s}{u_i}]=R_{i,j},
$$
we see that $Y$ is the blow up of the ideal sheaf $\mathfrak m\mathcal O_Z$.

The structure morphism $f:Y\rightarrow X$  factors as
a sequence of projective morphisms 
$$
Y\stackrel{g}{\rightarrow} Z\stackrel{h}{\rightarrow} X,
$$
where $Y$ is the blow up the ideal sheaf $\mathfrak m\mathcal O_Z$.
Define line bundles on $Y$ by $\mathcal L=g^*\mathcal O_Z(1)$ and $\mathcal M=\mathfrak m\mathcal O_Y$. Then $\mathcal O_Y(1)\cong \mathcal M\otimes \mathcal L$.  

We have $\mathcal O_Z(1)|V_j=\overline x_j\mathcal O_{V_j}$, $\mathcal L|U_{i,j}=\overline x_j\mathcal O_{U_{i,j}}$ and $\mathcal M|U_{i,j}=u_i\mathcal O_{U_{i,j}}$.

We give three consequences  (Proposition \ref{FG}, Proposition \ref{PropS2}
and Corollary \ref{CorS3}) of Serre's fundamental theorem for projective morphisms which will be useful.

\begin{Proposition}\label{FG} $\bigoplus_{k\ge 0}H^i(Y,\mathcal L^k)$ are finitely generated $R[E]$-modules for
all $i\in\NN$.
\end{Proposition}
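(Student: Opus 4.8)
The plan is to compute $H^i(Y,\mathcal L^k)$ by pushing forward along $g:Y\to Z$ and then invoking Serre's finiteness theorem on $Z=\mbox{Proj}(R[E])$. First I would record that $R[E]$ is a finitely generated, hence Noetherian, graded $R$-algebra generated in degree one, so that $Z$ is a Noetherian scheme projective over $X$ and $g$ (a blow up) is projective. Setting $\mathcal G_q=R^qg_*\mathcal O_Y$, the coherence theorem gives that each $\mathcal G_q$ is coherent on $Z$ and vanishes for $q>\dim Y$, and since $\mathcal O_Z(k)$ is invertible the projection formula yields $R^qg_*(\mathcal L^k)=R^qg_*(g^*\mathcal O_Z(k))\cong \mathcal G_q\otimes_{\mathcal O_Z}\mathcal O_Z(k)=:\mathcal G_q(k)$.

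Next, to handle all the $R[E]$-module structures at once, I would introduce the quasi-coherent sheaf of $\mathcal O_Z$-algebras $\mathcal A=\bigoplus_{k\ge 0}\mathcal O_Z(k)$ and its pullback $\mathcal B=g^*\mathcal A=\bigoplus_{k\ge 0}\mathcal L^k$, a sheaf of $\mathcal O_Y$-algebras. Since $Y$ and $Z$ are Noetherian, cohomology and the functors $R^qg_*$ commute with direct sums, so $H^i(Y,\mathcal B)=\bigoplus_{k\ge 0}H^i(Y,\mathcal L^k)$ and $R^qg_*\mathcal B=\bigoplus_{k\ge 0}\mathcal G_q(k)=\mathcal G_q\otimes_{\mathcal O_Z}\mathcal A$. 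The Leray spectral sequence of $g$ for $\mathcal B$,
$$
E_2^{p,q}=H^p(Z,R^qg_*\mathcal B)=\bigoplus_{k\ge 0}H^p(Z,\mathcal G_q(k))\ \Longrightarrow\ \bigoplus_{k\ge 0}H^{p+q}(Y,\mathcal L^k),
$$
is then a spectral sequence of graded modules over $\bigoplus_{k\ge 0}H^0(Z,\mathcal O_Z(k))$, hence over its subring $R[E]$, with all differentials and edge maps $R[E]$-linear. By Serre's fundamental theorem applied to $\mathcal G_q$ on $Z=\mbox{Proj}(R[E])$, each $\bigoplus_{k\ge 0}H^0(Z,\mathcal G_q(k))$ is a finitely generated graded $R[E]$-module; for $p>0$, $H^p(Z,\mathcal G_q(k))$ is a finitely generated $R$-module that vanishes for $k\gg 0$, so $\bigoplus_{k\ge0}H^p(Z,\mathcal G_q(k))$ is again finitely generated over $R[E]$. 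Consequently every $E_2^{p,q}$, and hence every $E_\infty^{p,q}$, is Noetherian over $R[E]$, and $\bigoplus_k H^i(Y,\mathcal L^k)$ --- being finitely filtered with subquotients the $E_\infty^{p,q}$, $p+q=i$ --- is a finitely generated $R[E]$-module.

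The main obstacle will be the bookkeeping in the middle step: checking that the Leray spectral sequences of the separate line bundles $\mathcal L^k$ genuinely organize into one spectral sequence of graded $R[E]$-modules, so that finite generation passes from the $E_2$-page to the abutment. Repackaging the twists into the sheaf of algebras $\mathcal A$ and using that cohomology commutes with direct sums on the Noetherian schemes $Y$ and $Z$ is the device I would use to make this transparent; the remaining ingredients are just Grothendieck's coherence theorem for the projective morphism $g$ and Serre's finiteness and vanishing theorems for the projective morphism $h:Z\to X$.
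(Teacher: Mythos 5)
Your argument is correct, but it follows a genuinely different route from the paper's. The paper treats the single projective morphism $f:Y\rightarrow X$ and observes that the natural surjections $f^*(\widetilde{E^k})\rightarrow \mathcal L^k$ make $\bigoplus_{k\ge 0}\mathcal L^k$ a finitely generated module over the pullback of the sheaf of graded algebras $\bigoplus_{k\ge 0}\widetilde{E^k}$; the graded relative finiteness theorem (Theorem III.2.4.1 of EGA) then says at once that $R^if_*(\bigoplus_k\mathcal L^k)$ is a finitely generated graded $\bigoplus_k\widetilde{E^k}$-module, and taking global sections over the affine $X$ finishes the proof in two lines, with no spectral sequence. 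You instead factor through $g:Y\rightarrow Z$ and $h:Z\rightarrow X$, using coherence of $\mathcal G_q=R^qg_*\mathcal O_Y$, the projection formula $R^qg_*(\mathcal L^k)\cong\mathcal G_q(k)$ (valid since $\mathcal O_Z(1)$ is invertible, $R[E]$ being generated in degree one), Serre finiteness for $\bigoplus_kH^0(Z,\mathcal G_q(k))$ and Serre vanishing for the positive-degree twists over the affine base, and then a Leray spectral sequence to assemble the abutment. This buys you the ability to quote only the most standard ``absolute'' forms of Serre's theorems, at the price of the bookkeeping you yourself flag: one must check that the $E_2$-terms, differentials, $E_\infty$-terms and the Leray filtration on $\bigoplus_kH^i(Y,\mathcal L^k)$ are all compatible with the $R[E]$-action. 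Your device of packaging the twists into $\mathcal A=\bigoplus_k\mathcal O_Z(k)$ and $\mathcal B=g^*\mathcal A$ and using that cohomology and $R^qg_*$ commute with direct sums on Noetherian schemes does handle this, since each element of $E$ induces a degree-one endomorphism of $\mathcal A$ and of $\mathcal B$ compatible with $g$, hence a morphism of the whole spectral sequence; together with the vanishing of $E_2^{p,q}$ outside a bounded range and the Noetherianity of $R[E]$, finite generation passes from $E_2$ to $E_\infty$ to the filtered abutment. So your proof is sound; the paper's is shorter because the graded-module version of the finiteness theorem it cites absorbs exactly the spectral-sequence bookkeeping you perform by hand.
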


\begin{proof} Let $\widetilde{E^k}$ be the sheafication of $E^k$ on $X$. From the natural surjections 
for $k\ge 0$ of
 $\mathcal O_Z$-modules $g^*(\widetilde{E^k})\rightarrow \mathcal O_Z(k)$, we obtain surjections $f^*(\widetilde{E^k})\rightarrow \mathcal L^k$ of $\mathcal O_Y$-modules, and a surjection
 $f^*(\bigoplus_{k\ge 0}\widetilde{E^k})\rightarrow \bigoplus_{k\ge 0}\mathcal L^k$. Hence $\bigoplus_{k\ge 0}\mathcal L^k$ is a finitely generated $f^*(\bigoplus_{k\ge 0}\widetilde{E^k})$-module. By Theorem III.2.4.1 \cite{EGA}, $R^if_*(\bigoplus_{k\ge 0}\mathcal L^k)$
 is a finitely generated $\bigoplus_{k\ge 0}\widetilde{E_k}$-module for $i\in\NN$. Taking global sections on the affine $X$, we obtain the conclusions of the proposition.
 \end{proof}
 
 \begin{Proposition}\label{PropS2} Suppose that $A$ is a Noetherian ring, and $B=\bigoplus_{k\ge 0}B_k$ is a finitely generated graded $A$-algebra, which is generated by $B_1$ as an $A$-algebra. Let $C=\mbox{Spec}(A)$ and $D=\mbox{Proj}(B)$. Let $\alpha:D\rightarrow C$ be the structure morphism.
 Then there exists a positive integer $\overline k$ such that $B_k=\Gamma(D,\mathcal O_D(k))$ for $k\ge \overline k$.
 \end{Proposition}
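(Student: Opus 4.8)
The plan is to compare the graded ring $B$ with the graded $B$-module $M:=\bigoplus_{k\ge 0}\Gamma(D,\mathcal O_D(k))$ through the canonical homomorphism, and to show that this homomorphism is an isomorphism in all sufficiently large degrees. First I would record the natural map: for each $k$ there is an $A$-linear map $\varphi_k\colon B_k\to\Gamma(D,\mathcal O_D(k))$ sending $b\in B_k$ to the global section whose restriction to each basic affine open $D_+(f)=\mathrm{Spec}(B_{(f)})$, $f\in B_1$, equals $b/1\in(B_f)_k=\Gamma(D_+(f),\mathcal O_D(k))$; these assemble into a homomorphism of graded $A$-algebras $\varphi\colon B\to M$. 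Because $B$ is generated over $A$ by $B_1$, the sheaf $\mathcal O_D(1)$ is invertible, $\mathcal O_D(k)\cong\mathcal O_D(1)^{\otimes k}$ for all $k$, and the opens $D_+(f)$ with $f\in B_1$ cover $D$; these are exactly the properties that make the statement true and they will be used throughout.

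Second, I would show that $M$ is a finitely generated $B$-module. Picking finitely many generators of the $A$-module $B_1$ gives a surjection $\alpha^*\widetilde{B_1}\to\mathcal O_D(1)$ of $\mathcal O_D$-modules, where $\widetilde{\,\cdot\,}$ denotes the sheaf on $C$ associated to an $A$-module; since $B_k$ is spanned by degree-$k$ monomials in $B_1$ this induces surjections $\alpha^*\widetilde{B_k}\to\mathcal O_D(k)$ for every $k$, hence a surjection of sheaves of graded $\mathcal O_D$-algebras $\alpha^*\widetilde B\to\bigoplus_{k\ge 0}\mathcal O_D(k)$. Thus $\bigoplus_{k\ge 0}\mathcal O_D(k)$ is a finitely generated $\alpha^*\widetilde B$-module and, $\alpha$ being a projective morphism, Theorem III.2.4.1 of \cite{EGA} — applied exactly as in the proof of Proposition~\ref{FG} — shows that $\alpha_*\bigl(\bigoplus_{k\ge 0}\mathcal O_D(k)\bigr)$ is a finitely generated $\widetilde B$-module on the affine scheme $C$. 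Taking global sections over $C$, the $B$-module $M=\bigoplus_{k\ge0}\Gamma(D,\mathcal O_D(k))$ is finitely generated.

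Third, I would bound the kernel and cokernel of $\varphi$ in terms of their degrees. An element $b\in B_k$ lies in $\ker\varphi_k$ exactly when its image in $(B_f)_k$ vanishes for every $f\in B_1$; applied to a finite generating set $f_1,\dots,f_r$ of $B_1$, this gives integers $N_i$ with $f_i^{N_i}b=0$, and since $B$ is generated in degree $1$ it follows that $B_+^Nb=0$ for some $N$. Hence $\ker\varphi$ coincides with the $B_+$-torsion submodule $H^0_{B_+}(B)$ of the Noetherian ring $B$; being finitely generated, graded in non-negative degrees, and annihilated by a power of $B_+$, it is nonzero in only finitely many degrees. For $\mathrm{coker}\,\varphi$ I would pass to sheaves on $D=\mathrm{Proj}(B)$: since $B$ is generated in degree $1$ one has $\widetilde B\cong\mathcal O_D$, and by the standard identification $\widetilde{\Gamma_*(\mathcal O_D)}\cong\mathcal O_D$ (valid because $B$ is generated in degree $1$) also $\widetilde M\cong\mathcal O_D$, and under these identifications $\widetilde\varphi$ is the identity. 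Therefore $\widetilde{\mathrm{coker}\,\varphi}=0$, so the finitely generated $B$-module $\mathrm{coker}\,\varphi$ is supported on $V(B_+)$ and, by the argument just used for $\ker\varphi$, is nonzero in only finitely many degrees. Choosing $\overline k$ larger than every degree in which $\ker\varphi$ or $\mathrm{coker}\,\varphi$ is nonzero, $\varphi_k\colon B_k\to\Gamma(D,\mathcal O_D(k))$ is an isomorphism for all $k\ge\overline k$, which is the claim.

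The only step that is not routine bookkeeping with the $\mathrm{Proj}$ construction is the appeal to Serre's finiteness theorem to see that $M$ is a finitely generated $B$-module; I expect the remaining delicate point to be the clean identification of $\ker\varphi$ and $\mathrm{coker}\,\varphi$ as $B$-modules that vanish in all large degrees. If one prefers to avoid the finiteness of $M$, one can instead fix a graded presentation $B=A[y_0,\dots,y_n]/P$ with $\deg y_i=1$, so that $D$ is a closed subscheme of $\mathbb P^n_A$ with ideal sheaf $\mathcal I$ and $\mathcal O_D(k)=\mathcal O_{\mathbb P^n_A}(k)|_D$, and use Serre vanishing $H^1(\mathbb P^n_A,\mathcal I(k))=0$ for $k\gg0$ together with $\Gamma(\mathbb P^n_A,\mathcal O_{\mathbb P^n_A}(k))=A[y_0,\dots,y_n]_k$ for $k\ge0$ to conclude surjectivity of $\varphi_k$ for $k\gg0$ directly; injectivity is handled by the same kernel computation.
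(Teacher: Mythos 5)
Your argument is correct and is essentially the paper's own proof: the engine in both is Serre's finiteness theorem (Theorem III.2.4.1 of \cite{EGA}) giving that $\bigoplus_{k\ge 0}\Gamma(D,\mathcal O_D(k))$ is a finite graded $B$-module, after which the kernel and cokernel of $B\to\bigoplus_k\Gamma(D,\mathcal O_D(k))$ are $B_+$-torsion and hence vanish in all large degrees. Your additional care with $\ker\varphi$ (which the paper glosses over by writing the quotient as if $B$ injects) and your alternative route via a closed embedding in $\PP^n_A$ and Serre vanishing are both sound but do not change the substance of the argument.
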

 
 \begin{proof} The ring $\bigoplus_{k\ge 0}\Gamma(D,\mathcal O_D(k))$ is a finitely generated graded
 $B$-module by Theorem III.2.4.1 \cite{EGA}. Hence $(\bigoplus_{k\ge 0}\Gamma(D,\mathcal O_D(k)))/B$
 is a finitely generated graded $B$-module. Since every element of this module is $B_+=\bigoplus_{k>0}B_k$ torsion, we have that $B_k/E_k=0$ for $k \gg0$.
 \end{proof}
 
 Taking the maximum over the $\overline k$ obtained from the above proposition applied to a finite affine cover of $W$, we obtain the following generalization of Proposition \ref{PropS2}.
 
 \begin{Corollary}\label{CorS3} Suppose that $W$ is a Noetherian scheme and $\mathcal B=\bigoplus_{k\ge 0}\mathcal B_k$ is a finitely generated graded $\mathcal O_W$-algebra, which is locally generated by $\mathcal B_1$ as a $\mathcal O_W$-algebra. Let $W'=\mbox{Proj}(\mathcal B)$ and let $\alpha:W'\rightarrow W$
be the structure morphism. Then there exists a positive integer $\overline k$ such that $\mathcal B_k=\alpha_*\mathcal O_{W'}(k)$ for $k\ge \overline k$,
\end{Corollary}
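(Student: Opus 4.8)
The plan is to reduce to the affine situation of Proposition \ref{PropS2} and then glue. Since $W$ is Noetherian, hence quasi-compact, I would choose a finite open affine cover $W=\bigcup_{i=1}^{n}C_i$ with $C_i=\mbox{Spec}(A_i)$, refining it if necessary so that on each $C_i$ the restriction of $\mathcal B$ is the quasi-coherent $\mathcal O_{C_i}$-algebra associated to a graded $A_i$-algebra $B^{(i)}=\bigoplus_{k\ge 0}B^{(i)}_k$ which is finitely generated over $A_i$ and generated by $B^{(i)}_1$; this is possible because, by hypothesis, $\mathcal B$ is a finitely generated graded $\mathcal O_W$-algebra that is locally generated by $\mathcal B_1$.

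Next I would use that the $\mbox{Proj}$ construction and its twisting sheaves are compatible with restriction to opens: $\alpha^{-1}(C_i)=W'\times_W C_i=\mbox{Proj}(B^{(i)})$, and under this identification $\mathcal O_{W'}(k)|_{\alpha^{-1}(C_i)}$ is the Serre twist $\mathcal O_{\mbox{Proj}(B^{(i)})}(k)$. Because pushforward commutes with the flat base change given by the open immersion $C_i\hookrightarrow W$, we obtain $(\alpha_*\mathcal O_{W'}(k))|_{C_i}\cong (\alpha_i)_*\mathcal O_{\mbox{Proj}(B^{(i)})}(k)$, where $\alpha_i:\mbox{Proj}(B^{(i)})\to C_i$ is the structure morphism; taking global sections over the affine $C_i$ identifies the latter with the $A_i$-module $\Gamma(\mbox{Proj}(B^{(i)}),\mathcal O(k))$.

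By Proposition \ref{PropS2}, for each $i$ there is a positive integer $\overline k_i$ such that $B^{(i)}_k=\Gamma(\mbox{Proj}(B^{(i)}),\mathcal O(k))$ for all $k\ge\overline k_i$. Put $\overline k=\max_{1\le i\le n}\overline k_i$. The canonical comparison morphism $\mathcal B_k\to\alpha_*\mathcal O_{W'}(k)$ of $\mathcal O_W$-modules restricts over each $C_i$ to the isomorphism just described, hence is an isomorphism of sheaves for every $k\ge\overline k$, which is the assertion. The only points requiring care are the base-change compatibilities in the middle step — that forming $\mbox{Proj}$, the twisting sheaves $\mathcal O(k)$, and the pushforward all localize correctly along $C_i\hookrightarrow W$ — together with the bookkeeping that the globally defined comparison map really is the one being checked affine-locally; these are standard facts, so beyond assembling them no serious obstacle arises.
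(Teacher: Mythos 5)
Your argument is correct and is essentially the paper's own proof: the paper simply applies Proposition \ref{PropS2} to a finite affine cover of the Noetherian scheme $W$ and takes the maximum of the resulting integers $\overline k_i$, exactly as you do. Your additional verifications (compatibility of $\mbox{Proj}$, the twists $\mathcal O(k)$, and $\alpha_*$ with restriction to the affine opens, and that the global comparison map is the one checked locally) are the standard details the paper leaves implicit.
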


\section{Asymptotic Growth}

\begin{Proposition}\label{Prop6*}   Let $(R,\mathfrak m)$ be a  local domain of depth $\ge 2$. Let $d$ be the dimension of $R$. Suppose that $E$ is a rank $e$ $R$-submodule of a finitely generated free $R$-module $F$. Let  notation be as above.
Then there exist positive integers $k_0$, $k_1$  and $\tau$ such that 
\begin{enumerate}
\item[1)] for $k\ge k_0$, $n\in\ZZ$  and $\mathfrak p\in X\setminus\{\mathfrak m\}$,
$$
\Gamma(Y,\mathcal M^n\otimes\mathcal L^k)_{\mathfrak p}=(E^k)_{\mathfrak p}.
$$
\item[2)] For $k\ge k_1$,
$$
E^k:_{F^k}\mathfrak m^{\infty}=\Gamma(Y,\mathcal M^{-k\tau}\otimes\mathcal L^k).
$$
\end{enumerate}

\end{Proposition}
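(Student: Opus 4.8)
The strategy is to exploit the geometry of the two blow-ups $Y \xrightarrow{g} Z \xrightarrow{h} X$ and the local descriptions of the line bundles $\mathcal L$ and $\mathcal M$ given in Section~\ref{Prep}, together with the finiteness results of Proposition~\ref{FG} and Corollary~\ref{CorS3}.

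For part (1), I would work locally on $X$ at a nonmaximal prime $\mathfrak p$. Localizing at $\mathfrak p$, the ideal $\mathfrak m$ becomes the unit ideal in $R_{\mathfrak p}$, so $\mathcal M_{\mathfrak p} = \mathfrak m\mathcal O_Y$ restricted over $\operatorname{Spec}(R_{\mathfrak p})$ is the trivial bundle and $\mathcal M^n \otimes \mathcal L^k$ restricted there is just $\mathcal L^k$; equivalently the blow-up $g$ does nothing over $X \setminus \{\mathfrak m\}$, so $Y$ agrees with $Z$ there. Thus $\Gamma(Y,\mathcal M^n\otimes\mathcal L^k)_{\mathfrak p} = \Gamma(Z, \mathcal O_Z(k))_{\mathfrak p}$ for all $n$. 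Now by Corollary~\ref{CorS3} (or Proposition~\ref{PropS2} applied to $B = R[E]$, since $R[E]$ is generated in degree $1$), there is a $k_0$ with $E^k = \Gamma(Z,\mathcal O_Z(k))$ for $k \ge k_0$; localizing gives the claim. One must be slightly careful that $k_0$ can be chosen uniformly in $\mathfrak p$ — but taking $k_0$ from the statement over the whole scheme and then localizing is exactly what Corollary~\ref{CorS3} provides, since $\alpha_*\mathcal O_{W'}(k)$ localizes correctly.

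For part (2), the point is that $E^k:_{F^k}\mathfrak m^\infty = H^0(X \setminus \{\mathfrak m\}, \widetilde{E^k})$ intersected with $F^k$, by (\ref{eqE'}) and (\ref{eqI'}), and that for depth $\ge 2$ this equals $H^0(X\setminus\{\mathfrak m\},\widetilde{E^k})$ itself by (\ref{eqU}). On the other side, $\Gamma(Y,\mathcal M^{-k\tau}\otimes\mathcal L^k) = \Gamma(Y, \mathcal L^k \otimes (\mathfrak m\mathcal O_Y)^{-k\tau})$. The idea is that twisting $\mathcal L^k$ down by a large negative power of $\mathcal M$ is, at the level of sections, the same as saturating: sections of $\mathcal M^{-k\tau}\otimes\mathcal L^k$ over $Y$ are exactly the rational sections of $\mathcal L^k$ whose pole order along the exceptional locus of $g$ is bounded by $k\tau$ in terms of $\mathcal M$. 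By Proposition~\ref{FG}, $\bigoplus_k H^0(Y,\mathcal L^k)$ is a finitely generated $R[E]$-module, hence so is $\bigoplus_k \Gamma(Y,\mathcal M^{-k\tau}\otimes\mathcal L^k) = \bigoplus_k (\mathfrak m^{k\tau} \text{-torsion-adjusted sections})$; choosing $\tau$ large enough (depending on finitely many generators) forces, for $k \ge k_1$, that $\Gamma(Y,\mathcal M^{-k\tau}\otimes\mathcal L^k)$ captures the full $\mathfrak m$-saturation. Concretely: on a chart $U_{i,j}$ one has $\mathcal L|_{U_{i,j}} = \overline x_j\mathcal O$ and $\mathcal M|_{U_{i,j}} = u_i\mathcal O$, so a section of $\mathcal M^{-k\tau}\otimes\mathcal L^k$ over $U_{i,j}$ is $\frac{\overline x_j^k}{u_i^{k\tau}}\cdot(\text{element of }R_{i,j})$, and comparing across charts and pushing to $X$ shows the global sections are precisely $\{ \phi \in F_L^k : \mathfrak m^{k\tau}\phi \subseteq E^k \text{ locally on } Y\}$. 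Using part (1) (which says $\mathcal M^{n}\otimes\mathcal L^k$ already equals $E^k$ away from $\mathfrak m$, so nothing is gained or lost there) one reduces to behavior at $\mathfrak m$, where for $\tau$ chosen via the finite generation one gets exactly $E^k :_{F^k_L} \mathfrak m^\infty = E^k :_{F^k}\mathfrak m^\infty$.

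The main obstacle I anticipate is the uniform linear-in-$k$ choice of $\tau$: one needs that the "pole order along the exceptional divisor $\mathcal M$" required to saturate $E^k$ grows at most linearly in $k$, and that a single $\tau$ works for all large $k$ simultaneously. This is where Proposition~\ref{FG} is essential — finite generation of $\bigoplus_k H^0(Y,\mathcal L^k)$ over the standard-graded ring $R[E]$ bounds the degrees of module generators, and hence bounds the $\mathcal M$-pole order needed in degree $k$ by a linear function $k\tau$. Verifying that this $\tau$, extracted from finitely many generators, genuinely realizes the saturation (rather than some intermediate submodule) is the delicate comparison step, handled on the affine charts $U_{i,j}$ as above.
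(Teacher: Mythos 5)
Your part (1) is essentially the paper's argument: away from $\mathfrak m$ the blow-up $Y\rightarrow Z$ is an isomorphism and $\mathcal M$ is trivial, and Proposition \ref{PropS2} (applied by the paper chart-by-chart over $U_i=\mbox{Spec}(R_{u_i})$, by you globally to $B=R[E]$ and then localized) identifies the sections with $(E^k)_{\mathfrak p}$ for $k\ge k_0$; that is fine. In part (2), the inclusion $\Gamma(Y,\mathcal M^{-k\tau}\otimes\mathcal L^k)\subset E^k:_{F^k}\mathfrak m^{\infty}$ can indeed be handled along the lines you sketch, by restricting sections to $f^{-1}(X\setminus\{\mathfrak m\})$, using part (1) to identify them with $\cap_i(E^k)_{u_i}=E^k:_{F_L^k}\mathfrak m^{\infty}$, and using depth $\ge 2$ (equation (\ref{eqU})) to land in $F^k$; the paper instead proves $\Gamma(Y,\mathcal M^{-n}\otimes\mathcal L^k)\subset F^k$ via an auxiliary resolution $W'$ of the rational map $\mbox{Proj}(S)\dashrightarrow Z$, Corollary \ref{CorS3} and the projection formula, but that difference is not the issue.

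The genuine gap is in the other inclusion, $E^k:_{F^k}\mathfrak m^{\infty}\subset\Gamma(Y,\mathcal M^{-k\tau}\otimes\mathcal L^k)$, i.e.\ exactly at the obstacle you flag: the existence of a single $\tau$ with $E^k:_{F^k}\mathfrak m^{k\tau}=E^k:_{F^k}\mathfrak m^{\infty}$ for all $k$. You propose to extract this from Proposition \ref{FG}, arguing that finite generation of $\bigoplus_k H^0(Y,\mathcal L^k)$ over $R[E]$ implies $\bigoplus_k\Gamma(Y,\mathcal M^{-k\tau}\otimes\mathcal L^k)$ is a finitely generated $R[E]$-module and that a suitable $\tau$ then ``captures the full saturation.'' This cannot work. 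Once the proposition is proved, $\Gamma(Y,\mathcal M^{-k\tau}\otimes\mathcal L^k)=E^k:_{F^k}\mathfrak m^{\infty}$ for $k\ge k_1$, and the module $\bigoplus_k E^k:_{F^k}\mathfrak m^{\infty}$ is in general \emph{not} finitely generated over $R[E]$: if it were, its quotient by $R[E]$ would be a finitely generated module killed by a power of $\mathfrak m$, so $\lambda(E^k:_{F^k}\mathfrak m^{\infty}/E^k)$ would eventually be a polynomial in $k$ and the limit of Theorem \ref{ThmInt} would be rational, contradicting the irrational example of \cite{CHST}. Finite generation of $\bigoplus_k H^0(Y,\mathcal L^k)$ controls a much smaller module and gives no bound on the exponent needed to saturate $E^k$ inside $F^k$. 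The missing ingredient is the linear growth of saturation (uniform primary decomposition) theorem of Katz--McAdam (1.5 \cite{KM}) or Swanson (Theorem 1.3 \cite{S}): applied to the ideal $ES$ in the polynomial ring $S=\mbox{Sym}(F)$ it gives $\tau$ with $(ES)^k:_S\mathfrak m^{k\tau}S=(ES)^k:_S(\mathfrak m S)^{\infty}$ for all $k$, and intersecting with $F^k$ via $\left[(ES)^k\right]_k=E^k$ yields $E^k:_{F^k}\mathfrak m^{k\tau}=E^k:_{F^k}\mathfrak m^{\infty}$. With that input, your explicit chart computation ($u_i^{k\tau}\sigma\in E^k$ forces $\sigma\in u_i^{-k\tau}\overline x_j^kR_{i,j}$ on each $U_{i,j}$) does give $E^k:_{F^k}\mathfrak m^{k\tau}\subset\Gamma(Y,\mathcal M^{-k\tau}\otimes\mathcal L^k)$ and the proof closes; without it, your choice of $\tau$ is unsubstantiated.
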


\begin{proof} We first establish 1). $U_i=\mbox{Spec}(R_{u_i})$ for $1\le i\le s$ is an affine cover of $X\setminus\{\mathfrak m\}$. $g|f^{-1}(U_i)$ is an isomorphism; in fact
$$
f^{-1}(U_i)=\mbox{Proj}(R[\mathfrak mE]_{u_i})=\mbox{Proj}(R[E]_{u_i})=h^{-1}(U_i).
$$
By Proposition \ref{PropS2}, there exist positive integers $a_i$ such that
$$
\Gamma(f^{-1}(U_i),\mathcal M^{-n}\otimes\mathcal L^k)=\Gamma(h^{-1}(U_i),\mathcal O_Z(k))
=(E^k)_{u_i}
$$
for $k\ge a_i$. Let $k_0=\mbox{max}\{a_1,\ldots,a_s\}$. Then for $\mathfrak p\in U_i$
$$
\Gamma(Y,\mathcal M^{-n}\otimes\mathcal L^k)_{\mathfrak p} =\Gamma(f^{-1}(U_i),\mathcal M^n\otimes\mathcal L^k)_{\mathfrak p}=(E^k)_{\mathfrak p}
$$
for $k\ge k_0$, establishing 1).

We now establish 2). Suppose that $n\ge 0$, and $k\ge 0$.   
Suppose that $\sigma\in E^k:_{F^k}\mathfrak m^n$. Let $i,j$ be such that
$1\le i\le s$ and $1\le j\le t$. 
$\sigma\mathfrak m^n\subset E^k$ implies $u_i^n\sigma\in E^k$ which implies there is an expansion
$$
u_i^n\sigma=\sum_{n_{1}+\cdots+n_{t}=k}r_{n_1,\ldots,n_t}\overline x_1^{n_{1}}\cdots\overline x_t^{n_{t}}
$$
with $r_{n_1,\ldots,n_t}\in R$. Thus 
$$
u_i^n\sigma=\overline x_j^k\left(\sum_{n_{1}+\cdots+n_{t}=k}r_{n_1,\ldots,n_t}(\frac{\overline x_1}{\overline x_j})^{n_{1}}\cdots(\frac{\overline x_t}{\overline x_j})^{n_{t}}\right),
$$
so that
$\sigma\in u_i^{-n}\overline x_j^kR_{i,j}$.
Thus 
$$
\sigma\in \cap_{i,j}u_i^{-n}\overline x_j^kR_{i,j}=\Gamma(Y,\mathcal M^{-n}\otimes\mathcal L^k).
$$
We have established that for $k\ge 0$ and $n\ge 0$,
\begin{equation}\label{eq2*}
E^k:_{F^k}\mathfrak m^n\subset \Gamma(Y,\mathcal M^{-n}\otimes\mathcal L^k).
\end{equation}

Recall that $S$ is a polynomial ring
$S=R[y_1,\ldots,y_{\gamma}]$ over $R$, where $\gamma$ is the rank of $F$. Let $W=\mbox{Proj}(S)$, with natural morphism $\alpha:W\rightarrow X$.  Let $\mathcal I$ be the sheafication of the graded ideal $ES$ on $W$.
We have expansions
$$
\overline x_i=\sum_{l=1}^{\gamma} f_{il}y_l
$$
with $f_{il}\in R$.

The inclusion $R[E]\subset S$ induces a rational map from $W$ to $Z$.

Let $\beta:W'\rightarrow W$ be the blow up of the ideal sheaf $\mathcal I$. Let $\mathcal N=\mathcal I\mathcal O_{W'}$ be the induced line bundle. $W'$ has an affine cover 
$A_{i,j}=\mbox{Spec}(T_{ij})$ for $1\le i\le  t$ and $1\le j\le \gamma$ with
$$
T_{ij}=R[\frac{y_1}{y_j},\ldots,\frac{y_{\gamma}}{y_j}][\frac{\overline x_1}{\overline x_i},\ldots,\frac{\overline x_t}{\overline x_i}].
$$
From the inclusions
$$
R_i=R[\frac{\overline x_1}{\overline x_i},\ldots,\frac{\overline x_t}{\overline x_i}]\subset T_{ij}
$$
we have  induced  morphisms $A_{i,j}\rightarrow V_i=\mbox{Spec}(R_i)$ which patch to give a morphism $\phi:W'\rightarrow Z$ which is a resolution of indeterminacy of the rational map
from $W$ to $Z$.

We calculate for all $i,j$,
$$
\phi^*(\mathcal O_Z(1))\mid A_{i,j}=\overline x_i\mathcal O_{A_{ij}}
=y_j(\sum_l f_{i,l}\frac{y_l}{y_j})\mathcal O_{A_{ij}}
=\left(\beta^*\mathcal O_W(1)\right)\mathcal I|A_{i,j},
$$
to see that 
$$
(\beta^*\mathcal O_W(1))\otimes\mathcal N\cong \phi^*\mathcal O_Z(1).
$$
By Corollary \ref{CorS3}, there exists a positive integer $k_1\ge k_0$ such that $\beta_*\mathcal N^k=\mathcal I^k$
for $k\ge k_1$. From the natural inclusion $\mathcal O_Z(k)\subset \phi_*\phi^*\mathcal O_Z(k)$, we have
by the projection formula that for $k\ge k_1$,
\begin{equation}\label{eq3*}
\begin{array}{lll}
h_*\mathcal O_Z(k)&\subset& h_*\phi_*(\phi^*\mathcal O_Z(k))
=\alpha_*\beta_*(\beta^*\mathcal O_W(k)\otimes\mathcal N^k)\\
&=&\alpha_*[\mathcal O_W(k)\otimes\beta_*\mathcal N^k]=\alpha_*[\mathcal O_W(k)\otimes\mathcal I^k]\\
&\subset&\alpha_*\mathcal O_W(k)=\widetilde{F^k},
\end{array}
\end{equation}
where $\widetilde{F^k}$ is the sheafication of the $R$-module $F$ on $X$.
Now we have
\begin{equation}\label{eq4*}
\begin{array}{lll}
\Gamma(Y,\mathcal M^{-n}\otimes\mathcal L^k)&=&
\Gamma(X,f_*(\mathcal M^{-n}\otimes\mathcal L^k))\\
&\subset& \Gamma(X\setminus\{\mathfrak m\},f_*(\mathcal M^{-n}\otimes\mathcal L^k))
=\Gamma(X\setminus\{\mathfrak m\},h_*\mathcal O_Z(k))\\
&\subset&\Gamma(X\setminus\{\mathfrak m\},\widetilde{F^k})=F^k
\end{array}
\end{equation}
since $R$, and hence the free $R$-module $F^k$, have depth $\ge 2$.

From (\ref{eq0}), we deduce that for $k,n\ge 0$,
\begin{equation}\label{eq5*}
\left((ES)^k:_S\mathfrak m^nS\right)\cap F^k=E^k:_{F^k}\mathfrak m^n.
\end{equation}

By  1.5 \cite{KM} or Theorem 1.3 \cite{S}, there exists a positive integer $\tau$ such that
$$
(ES)^k:_S\mathfrak m^{k\tau}S=(ES)^k:_S(\mathfrak mS)^{\infty}
$$
for all $k\ge 0$. Thus from (\ref{eq5*}) we have that
\begin{equation}\label{eq6*}
E^k:_{F^k}\mathfrak m^{k\tau}=E^k:_{F^k}\mathfrak m^{\infty}
\end{equation}
for $k\ge 0$. From (\ref{eq6*}), (\ref{eq2*}) and (\ref{eq4*}), we have inclusions
$$
E^k:_{F^k}\mathfrak m^{\infty}\subset \Gamma(Y,\mathcal M^{-k\tau}\otimes\mathcal L^k)\subset F^k
$$
for $k\ge k_1$. The conclusions of 2) of the proposition now follow from 1) of the proposition  since
$E^k:_{F^k}\mathfrak m^{\infty}$ is the largest $R$-submodule $N$ of $F^k$ which has the property that
$N_{\mathfrak p}=(E^k)_{\mathfrak p}$ for $\mathfrak p\in X\setminus\{\mathfrak m\}$.

\end{proof}

\begin{Theorem}\label{Theorem1*} Suppose that $(R,\mathfrak m)$ is a  local domain of depth $\ge 2$ which is essentially of finite type over a field $K$ of characteristic zero. Let $d$ be the dimension of $R$. 
Suppose that $E$ is a rank $e$ submodule of a finitely generated free $R$-module $F$.
Then the limit
$$
\lim_{k\rightarrow \infty}\frac{\lambda\left(E^k:_{F^k}\mathfrak m^{\infty}/E^k\right)}{k^{d+e-1}}\in \RR
$$
exists.
\end{Theorem}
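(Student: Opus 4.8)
The plan is to reduce the assertion to Lazarsfeld's theorem \cite{L} --- that on a projective variety of dimension $n$ over an algebraically closed field of characteristic zero the volume of a line bundle $\mathcal H$, namely $\mathrm{vol}(\mathcal H)=\limsup_k n!\,h^0(\mathcal H^k)/k^n$, is actually a limit --- which in turn rests on Fujita's approximation theorem \cite{F2}. The characteristic-zero hypothesis will be used only to pass, by a base change, to a situation over an algebraically closed field where Lazarsfeld's theorem is available; the parenthetical variant follows identically from the positive-characteristic Fujita-type theorems of \cite{Ta} and \cite{RM} (Remark \ref{charp}).

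By Proposition \ref{Prop6*}, for $k\gg0$ one has $E^k:_{F^k}\mathfrak m^\infty=\Gamma(Y,\mathcal M^{-k\tau}\otimes\mathcal L^k)$, a module lying between $E^k$ and $F^k$ and equal to $E^k$ away from $\mathfrak m$. First I would separate off the difference between $E^k$ and $\Gamma(Y,\mathcal L^k)$: using $E^k=\Gamma(Z,\mathcal O_Z(k))$ for $k\gg0$ (Proposition \ref{PropS2}) and $\Gamma(Y,\mathcal L^k)=\Gamma(Z,(g_*\mathcal O_Y)\otimes\mathcal O_Z(k))$, the quotient $\Gamma(Y,\mathcal L^k)/E^k$ embeds in the sections of a coherent sheaf supported on $h^{-1}(\mathfrak m)$, a scheme of dimension $\le d+e-2$, so that $\lambda(\Gamma(Y,\mathcal L^k)/E^k)=O(k^{d+e-2})$. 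Hence it suffices to prove that $\lim_{k\to\infty}k^{-(d+e-1)}\,\lambda\big(\Gamma(Y,\mathcal M^{-k\tau}\otimes\mathcal L^k)/\Gamma(Y,\mathcal L^k)\big)$ exists.

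To analyze this length I would filter $\Gamma(Y,\mathcal M^{-k\tau}\otimes\mathcal L^k)$ by the submodules $\Gamma(Y,\mathcal M^{-n}\otimes\mathcal L^k)$ for $0\le n\le k\tau$; since $\mathfrak m\mathcal O_Y=\mathcal M$, the successive quotients are $R/\mathfrak m$-vector spaces, and the exact sequences
$$
0\to\mathcal M^{-n+1}\otimes\mathcal L^k\to\mathcal M^{-n}\otimes\mathcal L^k\to(\mathcal M^{-n}\otimes\mathcal L^k)|_D\to0
$$
(with $D=f^{-1}(\mathfrak m)=V(\mathcal M)$ an effective Cartier divisor on $Y$, projective over $R/\mathfrak m$ of dimension $\le d+e-2$) identify them, up to the contribution of $H^1(Y,\mathcal M^{-n+1}\otimes\mathcal L^k)$, with $H^0\big(D,(\mathcal M|_D)^{-n}\otimes(\mathcal L|_D)^k\big)$. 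Writing $\mathcal G=\mathcal M^{-\tau}\otimes\mathcal L$ and reindexing by $m=k\tau-n$, the resulting sum $\sum_{m}h^0\big(D,(\mathcal M|_D)^{m}\otimes(\mathcal G|_D)^{k}\big)$ is precisely $h^0$ of the $k$-th power of the \emph{fixed} line bundle $\mathcal H:=\mathcal O_{\PP(\mathcal O_D\oplus\mathcal M|_D)}(\tau)\otimes p^*(\mathcal G|_D)$ on the projective $(R/\mathfrak m)$-variety $\PP(\mathcal O_D\oplus\mathcal M|_D)$, of dimension $\le d+e-1$ (here $p$ is the projection to $D$), up to terms of order $O(k^{d+e-2})$. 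Thus, granting that $\sum_{n\le k\tau}h^1(Y,\mathcal M^{-n+1}\otimes\mathcal L^k)$ is $o(k^{d+e-1})$ --- which one obtains from Proposition \ref{FG}, the relative ampleness of $\mathcal M|_D$ over $h^{-1}(\mathfrak m)$, and a uniform Serre/Fujita vanishing estimate --- and reducing by a faithfully flat base change (which scales all lengths by a fixed constant) to the case where $R/\mathfrak m$ is algebraically closed of characteristic zero, Lazarsfeld's theorem gives
$$
\lim_{k\to\infty}\frac{\lambda(E^k:_{F^k}\mathfrak m^\infty/E^k)}{k^{d+e-1}}=\frac{\mathrm{vol}(\mathcal H)}{(d+e-1)!}\in\RR .
$$

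The step I expect to be the main obstacle is making the reduction in the previous paragraph rigorous: realizing the finite-length modules --- which, owing to the $\mathfrak m^{k\tau}$-torsion, are naturally supported on the thickened fibre $\mathrm{Spec}(R/\mathfrak m^{k\tau})$ rather than on $\mathrm{Spec}(R/\mathfrak m)$ --- as the spaces of sections of powers of one fixed big line bundle on a single projective variety over a field, which is exactly the setting to which Fujita approximation applies; controlling the lower-order error terms, above all the aggregate of the $H^1$ corrections, which a priori is of the same order $k^{d+e-1}$ as the main term and must be shown to be genuinely smaller; and executing the base change to an algebraically closed residue field of characteristic zero while keeping track of its effect on lengths. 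The remaining ingredients are bookkeeping with the constructions and results of Section \ref{Prep}.
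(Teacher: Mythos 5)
Your reduction via Proposition \ref{Prop6*} and the disposal of $\Gamma(Y,\mathcal L^k)/E^k$ as $O(k^{d+e-2})$ agree with the paper, but from that point your route diverges, and the divergence is where the proof breaks down. You filter $\Gamma(Y,\mathcal M^{-k\tau}\otimes\mathcal L^k)$ by the $\Gamma(Y,\mathcal M^{-n}\otimes\mathcal L^k)$, $0\le n\le k\tau$, and each step of the filtration is only controlled up to an error $h^1(Y,\mathcal M^{-n+1}\otimes\mathcal L^k)$. Each such term can be of size $\sim k^{d+e-2}$ and there are $\sim k\tau$ of them, so the aggregate error is a priori of the same order $k^{d+e-1}$ as the main term -- you say this yourself, and then simply ``grant'' that it is $o(k^{d+e-1})$. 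None of the tools you cite delivers this: Proposition \ref{FG} concerns $\bigoplus_k H^i(Y,\mathcal L^k)$ only, with no twists by negative powers of $\mathcal M$ (and $\bigoplus_{n,k}H^1(Y,\mathcal M^{-n}\otimes\mathcal L^k)$ is not finitely generated over any obvious bigraded algebra, since negative twists by $\mathcal M$ are exactly the saturation-type data whose growth is the subject of the theorem); relative ampleness of $\mathcal M$ along $g$ works against you, since it is $\mathcal M^{-n}$ that appears; and there is no ``uniform Serre/Fujita vanishing'' available on $Y$, which is only projective over $\mbox{Spec}(R)$, not over a field. The paper's construction of $\overline Y$, $\mathcal C=\overline f^*(\mathcal A^l)\otimes\mathcal L$ and $\mathcal B=\mathcal C\otimes\mathcal M^{-\tau}$ exists precisely to avoid this: by compactifying over the subfield $K'$ and twisting by the pullback of an ample bundle from $\overline X$, the whole length is captured in a single exact sequence (\ref{eq10*}) per $k$, with only one correction term $h^1(\overline Y,\mathcal C^k)$ for a semiample $\mathcal C$, which is $o(k^{d+e-1})$ by Fujita's vanishing theorem \cite{F1}, while $h^0(\overline Y,\mathcal B^k)$ is handled by Fujita approximation since $\mathcal B$ is big. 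Without an argument of this kind, your sandwich does not close and existence of the limit is not established.

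A second, lesser issue: your main term is $h^0$ of powers of $\mathcal H$ on $\PP(\mathcal O_D\oplus\mathcal M|_D)$, where $D=V(\mathcal M)$ is in general a non-reduced, possibly reducible projective scheme over $R/\mathfrak m$ (and $R/\mathfrak m$ need not be algebraically closed, nor need the scheme be geometrically irreducible after your base change). Lazarsfeld's volume-as-limit theorem, as cited, applies to line bundles on irreducible projective varieties; to use it here you would need a version for arbitrary complete schemes or for graded linear series (e.g.\ by filtering by the nilradical and passing to irreducible components with coherent-sheaf coefficients, where a Fujita-type statement with coefficients must be proved). The paper sidesteps this too, since its $\mathcal B$ and $\mathcal C$ live on the integral variety $\overline Y$ over $K'$, with only the mild reduction of Remark \ref{charp} needed in positive characteristic. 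So the overall skeleton (reduce to counting sections, invoke Fujita/Lazarsfeld) is the right one, but the two key steps you defer -- the aggregate $H^1$ bound and the applicability of the volume theorem on the special-fibre scheme -- are exactly the content the paper's compactification argument supplies, and as written your proposal has a genuine gap there.
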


\begin{proof}
Let notation be as above.

First consider  
the short exact sequences
\begin{equation}\label{eq11*}
0\rightarrow \Gamma(Y,\mathcal L^k)/E^k\rightarrow E^k:_{F^k}\mathfrak m^{\infty}/E^k\rightarrow
  E^k:_{F^k}\mathfrak m^{\infty}/\Gamma(Y,\mathcal L^k)\rightarrow 0.
 \end{equation}

 $\bigoplus_{k\ge 0}\Gamma(Y,\mathcal L^k)$
 is a finitely generated $R[E]$-module by Lemma \ref{FG}.
 By 1) of Proposition \ref{Prop6*},  the support of the $R$-module 
 $\Gamma(Y,\mathcal L^k)/E^k$ is contained in $\{\mathfrak m\}$ for all $k$. Since 
 $(\bigoplus_{k\ge 0}\Gamma(Y,\mathcal L^k))/R[E]$ is a finitely generated $R[E]$-module,
 there is a positive integer $r$ such that $\mathfrak m^r(\Gamma(Y,\mathcal L^k)/E^k)=0$ for all $k$.
 Since $\mbox{dim }R[E]/\mathfrak mR[E]\le \mbox{dim }R+\mbox{rank }E-1=d+e-1$, 
 and $R/\mathfrak m^r$ is an Artin local ring, we have by the Hilbert-Serre theorem that  $\lambda(\Gamma(Y,\mathcal L^k)/E^k)$ is a polynomial of degree less than or equal to $d+e-2$ for $k\gg 0$.
 Thus there exists a constant $\alpha$ such that $\lambda(\Gamma(Y,\mathcal L^k)/E^k)\le\alpha k^{d+e-2}$ for all $k$.
 From (\ref{eq11*}), we are now reduced to showing that the limit
 $$
 \lim_{k\rightarrow\infty}
 \frac{\lambda( E^k:_{F^k}\mathfrak m^{\infty}/\Gamma(Y,\mathcal L^k))}{k^{d+e-1}}
 $$
 exists, from which we will have 
 \begin{equation}\label{eq12*}
 \lim_{k\rightarrow\infty}
 \frac{\lambda( E^k:_{F^k}\mathfrak m^{\infty}/\Gamma(Y,\mathcal L^k))}{k^{d+e-1}}=
\lim_{k\rightarrow\infty}\frac{
\lambda( E^k:_{F^k}\mathfrak m^{\infty}/E^k)}{k^{d+e-1}}.
\end{equation}

Taking global sections of the short exact sequences
$$
0\rightarrow \mathcal L^k\rightarrow \mathcal M^{-k\tau}\otimes \mathcal L^k\rightarrow
\mathcal M^{-k\tau}\otimes\mathcal L^k\otimes(\mathcal O_Y/\mathfrak m^{k\tau}\mathcal O_Y)\rightarrow 0,
$$
we obtain by Proposition \ref{Prop6*} left exact sequences
\begin{equation}\label{eq7*}
0\rightarrow E^k:_{F^k}\mathfrak m^{\infty}/\Gamma(Y,\mathcal L^k)\rightarrow \Gamma(Y,\mathcal M^{-k\tau}\otimes \mathcal L^k
\otimes (\mathcal O_Y/\mathfrak m^{k\tau}\mathcal O_Y))\rightarrow H^1(Y,\mathcal L^k)
\end{equation}
for $k\ge k_1$.

Let $u_1,\ldots,u_s$ be generators of the ideal $\mathfrak m$, and set $U_i=\mbox{Spec}(R_{u_i})$, so that $\{U_1,\ldots,U_s\}$ is
an affine cover of $X\setminus\{\mathfrak m\}$. Then $\mathcal L|f^{-1}(U_i)$ is ample, so there exist
 positive integers $b_i$ such that $R^1f_*(\mathcal L^k)\mid U_i=0$ for $k\ge b_i$. Let $k_2=\mbox{max}\{b_1,\ldots,b_s\}$. We have that the support of  $H^1(Y,\mathcal L^k)$ is contained in $\{\mathfrak m\}$ for $k\ge k_2$.

$\bigoplus_{k\ge 0}H^1(Y,\mathcal L^k)$ is a finitely generated $R[E]$-module by Lemma \ref{FG}. Hence the submodule
$M=\bigoplus_{k\ge k_2}H^1(Y,\mathcal L^k)$ is a finitely generated graded $R[E]$-module.  We have that $\mathfrak m^rM=0$
for some positive integer $r$. Since
$$
\mbox{dim }R[E]/\mathfrak mR[E]\le\mbox{dim }R+\mbox{rank }E-1=d+e-1,
$$
and $R/\mathfrak m^r$ is an Artin local ring, we have by the Hilbert-Serre theorem that  $\lambda(H^1(Y,\mathcal L^k))$ is a polynomial of degree less than or equal to $d+e-2$ for $k\gg 0$. Thus
 there exists a constant $c$ such that 
$$
\lambda(H^1(Y,\mathcal L^k))\le ck^{d+e-2}
$$
for all $k\ge 0$. By consideration of (\ref{eq12*}) and (\ref{eq7*}), we are reduced to proving that the limit
\begin{equation}\label{eq9*}
\lim_{k\rightarrow \infty}\frac{\lambda(H^0(Y,\mathcal M^{-k\tau}\otimes\mathcal L^k\otimes\mathcal O_Y/\mathfrak m^{k\tau}\mathcal O_Y))}{k^{d+e-1}}
\end{equation}
exists.

If $R/\mathfrak m$ is algebraic over $K$, let $K'=K$. If $R/\mathfrak m$ is transcendental over $K$, let $t_1,\ldots,t_r$ be a lift of a transcendence basis of $R/\mathfrak m$ over $K$ to $R$. The rational function field $K(t_1,\ldots,t_r)$ is contained in $R$. Let $K'=K(t_1,\ldots,t_r)$. We have that $R/\mathfrak m$ is finite algebraic over $K'$. 

There exists an  affine $K'$-variety $X'=\mbox{Spec}(A)$ such that $R$ is the local ring of a
closed point $\alpha$ of $X'$, and $E$ extends to a submodule $E'$ of $A^{\gamma}$, where $\gamma$ is the rank of the free $R$-module $F$. We then have an inclusion of
graded $A$-algebras $A[E']\subset \mbox{Sym}(A^{\gamma})$ which extends $R[E]$. 
Identify $\mathfrak m$ with its extension to a maximal ideal of $A$.
The structure morphism
$Y'=\mbox{Proj}(A[\mathfrak m E'])\rightarrow X'$ is projective and its localization at  $\mathfrak m$
 is  $f:Y\rightarrow X$.
Let $\overline X$ be a projective closure of $X'$ and let $\tilde Y$ be a projective closure of $Y'$.
$X'$ is an open subset of $\overline X$ and $Y'$ is an open subset of $\tilde Y$. 
Let $\overline Y\rightarrow \tilde Y$ be the blow up of an ideal sheaf which gives a resolution of indeterminancy of the rational map from $\tilde Y$ to $\overline X$. We may assume that the morphism
$\overline Y\rightarrow \tilde Y$ is an isomorphism over the locus where the rational map is a morphism, and thus an isomorphism over the  subset $Y'$ of $\tilde Y$. Let $\overline f:\overline Y\rightarrow \overline X$ be the resulting morphism. We now establish that $\overline f^{-1}(X')=Y'$. Suppose that $p\in X'$ and $q\in \overline f^{-1}(p)$. Let $V$ be a valuation ring of the function field $L$ of $\overline Y$ (which is also the function field of $Y'$) which dominates the local ring $\mathcal O_{\overline Y,q}$.
By assumption, $V$ dominates the local ring $\mathcal O_{X',p}$. $V$ dominates the local ring of a point on $Y'$, by the valuative criterion for properness (Theorem II.4.7 \cite{H})
applied to the proper morphism $Y'\rightarrow X'$. Since
$V$ dominates the local ring of a unique point on $\overline Y$, we have that 
$q\in Y'$.

After possibly replacing $\overline Y$ with the blow up of an ideal sheaf on $\overline Y$ whose support is
disjoint from $Y'$, we may assume that $\mathcal L$ extends to a line bundle on $\overline Y$ which we will also denote by $\mathcal L$. We will identify $\mathfrak m$ with its extension to the ideal sheaf of the point
$\alpha$ on $\overline X$, and identify $\mathcal M$ with its extension $\mathfrak m\mathcal O_{\overline Y}$
to a line bundle on $\overline Y$.
Let $\mathcal A$ be an ample divisor on $\overline X$. Then there
exists $l>0$ such that $\mathcal C=\overline f^*(\mathcal A^l)\otimes\mathcal L$
is generated by global sections and is big.

Set $\mathcal B=\mathcal C\otimes \mathcal M^{-\tau}$. Tensor the short exact sequences
$$
0\rightarrow \mathcal M^{k\tau}\rightarrow \mathcal O_{\overline Y}\rightarrow
\mathcal O_{\overline Y}/\mathfrak m^{k\tau}\mathcal O_{\overline Y}\cong \mathcal O_Y/\mathfrak m^{k\tau}\mathcal O_Y\rightarrow 0
$$
with $\mathcal B^k$ to obtain the short exact sequences
$$
0\rightarrow \mathcal C^k\rightarrow \mathcal B^k\rightarrow \mathcal M^{-k\tau}\otimes \mathcal L^k\otimes
\mathcal O_Y/\,\mathfrak m^{k\tau}\mathcal O_Y\rightarrow 0
$$
for $k\ge 0$. Taking global sections, we have exact sequences
\begin{equation}\label{eq10*}
0\rightarrow H^0(\overline Y,\mathcal C^k)\rightarrow H^0(\overline Y,\mathcal B^k)\rightarrow 
H^0(Y,\mathcal M^{-k\tau}\otimes \mathcal L^k\otimes
\mathcal O_Y/\,\mathfrak m^{k\tau}\mathcal O_Y)\rightarrow H^1(\overline Y, \mathcal C^k).
\end{equation}

For a coherent sheaf $\mathcal F$ on $\overline Y$, let
$$
h^i(\overline Y,\mathcal F)=\mbox{dim}_{K'}H^i(\overline Y,\mathcal F).
$$
Since $\mathcal C$ is semiample (generated by global sections and big) and $\overline Y$ has dimension $d+e-1$, we have that 
$$
\lim_{k\rightarrow \infty}\frac{h^1(\overline Y,\mathcal C^k)}{k^{d+e-1}}=0.
$$
This follows for instance from \cite{F1}.
Since $\bigoplus_{k\ge 0}H^0(\overline Y,\mathcal C^k)$ is a finitely generated $K'$ algebra of dimension 
$d+e$, as $\mathcal C$ is generated by global sections and is big (or by the Riemann Roch theorem and the vanishing theorem of \cite{F1}) we have that
the limit
$$
\lim_{k\rightarrow\infty}\frac{h^0(\overline Y,\mathcal C^k)}{k^{d+e-1}}\in\QQ
$$
exists. Since $\mathcal B$ is big, by the corollary to \cite{F2} given in Example 11.4.7 \cite{L} or \cite{CHST},
we have that 
the limit
$$
\lim_{k\rightarrow\infty}\frac{h^0(\overline Y,\mathcal B^k)}{k^{d+e-1}}\in\RR
$$
exists. From the sequence (\ref{eq10*}), we see that
$$
\lim_{k\rightarrow \infty}\frac
{h^0(Y,\mathcal M^{-k\tau}\otimes \mathcal L^k\otimes
\mathcal O_Y/\,\mathfrak m^{k\tau}\mathcal O_Y)}{k^{d+e-1}}\in \RR
$$
exists. The conclusions of the theorem now follow from (\ref{eq9*}) and the formula
$$
\begin{array}{lll}
h^0(Y,\mathcal M^{-k\tau}\otimes \mathcal L^k\otimes
\mathcal O_Y/\,\mathfrak m^{k\tau}\mathcal O_Y)&=&\mbox{dim}_{K'}
H^0(Y,\mathcal M^{-k\tau}\otimes \mathcal L^k\otimes
\mathcal O_Y/\,\mathfrak m^{k\tau}\mathcal O_Y)\\
&=&[R/\mathfrak m:K']\lambda(H^0(Y,\mathcal M^{-k\tau}\otimes \mathcal L^k\otimes
\mathcal O_Y/\,\mathfrak m^{k\tau}\mathcal O_Y)).
\end{array}
$$
\end{proof}

\begin{Remark}\label{charp} The conclusions of Theorem \ref{Theorem1*} are also true if $K$ is a perfect field of
positive characteristic and $R/\mathfrak m$ is algebraic over $K$. In this case we have that $K'=K$ in the proof of Theorem \ref{Theorem1*}. Let $\overline K$ be an algebraic closure of $K$. Since $K$ is perfect, $\overline Y\times_K\overline K$ is reduced, and to compute the limit, we reduce to computing the sections of the pullback of $\mathcal B^k$ on the disjoint union of the irreducible (integral) components of $\overline Y\times_K\overline K$.  Fujita's approximation theorem is valid on varieties over an algebraically closed field of positive characteristic, as was shown by Takagi \cite{Ta}, from which the existence of the limit now follows.
\end{Remark}

\begin{Remark} Theorem \ref{Theorem1*} is proven for graded ideals in \cite{CHST}. An example where the limit is an irrational number is given in \cite{CHST}. 
The  theorem is proven with the additional assumptions that $R$ is regular, $E=I$
is an ideal in $F=R$, and the singular locus of $\mbox{Spec}(R/I)$ is $\mathfrak m$ in \cite{CHS}. 
Kleiman \cite{K} has proven Theorem \ref{Theorem1*} in the case that $E$ is a direct summand of $F$ locally at every nonmaximal prime of $R$.
\end{Remark}

\begin{Corollary}\label{LC} Suppose that $(R,\mathfrak m)$ is a  local domain of depth $\ge 2$ 
which is essentially of finite type over a field $K$ of characteristic zero. Let $d$ be the dimension of $R$. Suppose that $E$ is a rank $e$ submodule of a finitely generated free $R$-module $F$. Then the limit
$$
\lim_{k\rightarrow \infty}\frac{(d+e-1)!}{k^{d+e-1}}\lambda(H^0_{\mathfrak m}(F^k/E^k))\in\RR
$$
exists. 
Thus the epsilon multiplicity $\epsilon(E)$ of the module $E$, defined in  \cite{UV} as a limsup, actually exists as a limit.
\end{Corollary}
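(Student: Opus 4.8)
The plan is to deduce this immediately from Theorem \ref{Theorem1*} together with the local cohomology identity recorded in Section \ref{Prep}. First I would use the isomorphism (\ref{eqI'}), namely $H^0_{\mathfrak m}(F^k/E^k)\cong E^k:_{F^k}\mathfrak m^{\infty}/E^k$, which holds for every $k\ge 0$, to rewrite the length in question as $\lambda(H^0_{\mathfrak m}(F^k/E^k))=\lambda\!\left(E^k:_{F^k}\mathfrak m^{\infty}/E^k\right)$. Since $R$ is a local domain of depth $\ge 2$ essentially of finite type over a field of characteristic zero, Theorem \ref{Theorem1*} applies and gives that $\lim_{k\to\infty}\lambda\!\left(E^k:_{F^k}\mathfrak m^{\infty}/E^k\right)/k^{d+e-1}$ exists in $\RR$.

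Multiplying this convergent sequence by the fixed positive constant $(d+e-1)!$ does not affect convergence, so $\lim_{k\to\infty}\frac{(d+e-1)!}{k^{d+e-1}}\lambda(H^0_{\mathfrak m}(F^k/E^k))$ exists in $\RR$, which is the first assertion of the corollary. For the final sentence I would recall that the epsilon multiplicity $\epsilon(E)$ is defined in \cite{UV} as precisely the limit superior of the sequence $\frac{(d+e-1)!}{k^{d+e-1}}\lambda(H^0_{\mathfrak m}(F^k/E^k))$; once this sequence is known to converge, its limsup coincides with its limit, and hence $\epsilon(E)$ exists as a genuine limit. The same reasoning also yields the statement over a perfect field $K$ of positive characteristic with $R/\mathfrak m$ algebraic over $K$, using Remark \ref{charp} in place of Theorem \ref{Theorem1*}.

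Because the argument is a direct citation of (\ref{eqI'}) and Theorem \ref{Theorem1*}, there is no substantive obstacle here. The only point I would want to check carefully is purely bookkeeping: that the normalization by $(d+e-1)!/k^{d+e-1}$ appearing in the statement is exactly the normalization used in \cite{UV} to define $\epsilon(E)$, so that ``the limit exists'' and ``the epsilon multiplicity exists as a limit'' are literally the same assertion rather than merely analogous ones.
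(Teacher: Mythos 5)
Your proposal is correct and matches the paper's own argument exactly: the paper also deduces the corollary immediately from the isomorphism (\ref{eqI'}) together with Theorem \ref{Theorem1*}, with the positive characteristic case handled separately via Remark \ref{charp} (as recorded in Remark \ref{charp2}). The only content beyond those citations is the trivial rescaling by $(d+e-1)!$ and the observation that a convergent sequence has limsup equal to its limit, which you state correctly.
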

The example of \cite{CHST} shows that $\epsilon(E)$ may be an irrational number.

\begin{proof} The corollary is immediate from Theorem \ref{Theorem1*} and (\ref{eqI'}).
\end{proof}

\begin{Remark}\label{charp2} The conclusions of Corollary \ref{LC} are valid if $K$ is a perfect field of positive characteristic  and $R/\mathfrak m$ is algebraic over $K$, by Remark \ref{charp}.
\end{Remark}

\section{Extension to domains of dimension $\ge 2$.}\label{Exts}

In this section, we prove  extensions of Theorem \ref{ThmInt} and Corollary \ref{CorInt} to domains of dimension $\ge 2$. Let notation be as in Section \ref{Prep}.

Suppose  that $R$ is a domain of dimension $d\ge 2$ with a dualizing module. By the Theorem of Finiteness, Theorem VIII.2.1 (and footnote) \cite{SGAII}, 
\begin{equation}\label{eqG}
\overline R=
\Gamma(X\setminus \{\mathfrak m\},\mathcal O_X)=\cap _{\mathfrak p\in X\setminus \{\mathfrak m\}}R_{\mathfrak p}
\end{equation}
is a finitely generated $R$-module, which lies between $R$ and its quotient field. Since $\overline R/R$ is $\mathfrak m$-torsion, 
\begin{equation}\label{eqS}
\lambda_R(\overline R/R)<\infty.
\end{equation}
Let $\mathfrak m_1,\ldots, \mathfrak m_\alpha$ be the maximal ideals of $\overline R$ which lie over $\mathfrak m$.  By our construction, 
$$
0=H^1_{\mathfrak m}(\overline R)=H^1_{\mathfrak m\overline R}(\overline R)=\bigoplus_{i=1}^{\alpha}H^1_{\mathfrak m_i\overline R}(\overline R),
$$
so 
$$
H^1_{\mathfrak m_i\overline R_{\mathfrak m_i}}(\overline R_{\mathfrak m_i})=H^1_{\mathfrak m_i\overline R}(\overline R)\otimes_{\overline R}\overline R_{\mathfrak m_i}=
0
$$
 for $1\le i \le \alpha$, and thus
$\mbox{depth}(\overline R_{\mathfrak m_i})\ge 2$ for $1\le i\le \alpha$.

Let $\overline F=F\otimes_R\overline R$ and $\overline R[\overline F]=\bigoplus_{k\ge 0}\overline F^k$,
so that $\overline F^k\cong F^k\otimes_R\overline R$ for all $k$. Let $\overline E=\overline RE$ be the  $\overline R$-submodule of $\overline F$ generated by $E$. Let $\overline R[\overline E]=\bigoplus_{k\ge 0}\overline E^k$ be the $\overline R$-subalgebra of $\overline R[\overline F]$ generated by $\overline E$.

Let $u_1,\ldots,u_s$ be
 generators of the ideal $\mathfrak m$. For $k\in\NN$, 
let $\widetilde{E^k}$ be the sheafification of $E^k$ on $X=\mbox{Spec}(R)$.

There are identities
\begin{equation}\label{eqE}
H^0(X\setminus\{\mathfrak m\},\widetilde{E^k})=\cap_{i=1}^s(E^k)_{u_i}
=E^k:_{\overline F^k}\mathfrak m^{\infty}.
\end{equation}
From the  exact sequence of cohomology groups
$$
0\rightarrow H^0_{\mathfrak m}(E^k)\rightarrow E^k\rightarrow H^0_{\mathfrak m}(X\setminus \{\mathfrak m\},\widetilde{E^k})\rightarrow H^1_{\mathfrak m}(E^k)\rightarrow 0,
$$
we deduce that we have isomorphisms of $R$-modules 
\begin{equation}\label{eqF}
H^1_{\mathfrak m}(E^k)\cong E^k:_{\overline F^k}\mathfrak m^{\infty}/E^k
\end{equation}
for $k\ge 0$. The same calculation for $F^k$ shows that 
\begin{equation}\label{eqQ}
H^1_{\mathfrak m}(F^k)\cong F^k:_{\overline F^k}\mathfrak m^{\infty}/F^k.
\end{equation}

From the left exact local cohomology sequence
$$
0\rightarrow H^0_{\mathfrak m}(F^k/E^k)\rightarrow H^1_{\mathfrak m}(E^k)\rightarrow H^1_{\mathfrak m}(F^k),
$$
we have that
\begin{equation}\label{eqI}
H^0_{\mathfrak m}(F^k/E^k)\cong \left(E^k:_{\overline F^k}\mathfrak m^{\infty})\cap F^k\right)/E^k=E^k:_{F^k}\mathfrak m^{\infty}/E^k.
\end{equation}

\begin{Theorem}\label{Theorem3*} Suppose that $(R,\mathfrak m)$ is a  local domain of
  dimension $d\ge 2$
 which is essentially of finite type over a field $K$ of characteristic zero
 (or over a perfect field $K$ such that $R/\mathfrak m$ is algebraic over $K$).
Suppose that $E$ is a rank $e$ submodule of a finitely generated free $R$-module $F$.
Then the limit
\begin{equation}\label{*eq1}
\lim_{k\rightarrow \infty}\frac{\lambda\left(E^k:_{\overline F^k}\mathfrak m^{\infty}/E^k\right)}{k^{d+e-1}}\in\RR
\end{equation}
exists.
\end{Theorem}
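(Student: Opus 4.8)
The plan is to reduce the statement to the depth-$\ge 2$ case already established in Theorem \ref{Theorem1*}, by passing from $R$ to the module-finite birational extension $\overline R=\Gamma(X\setminus\{\mathfrak m\},\mathcal O_X)$. The point is that, while $R$ itself may have depth $1$, the localizations $\overline R_{\mathfrak m_i}$ at the finitely many maximal ideals $\mathfrak m_1,\dots,\mathfrak m_\alpha$ of $\overline R$ lying over $\mathfrak m$ were seen above to be local domains of depth $\ge 2$. First I would record that $\overline R/R$ is killed by a fixed power $\mathfrak m^N$ (independent of $k$, since $\overline R$ is a finite $R$-module and $\lambda_R(\overline R/R)<\infty$), whence $\overline R_{u_i}=R_{u_i}$ and so $(\overline E^k)_{u_i}=(E^k)_{u_i}$ for every $k$ and every generator $u_i$ of $\mathfrak m$. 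Combining this with (\ref{eqE}) and its analogue over $\overline R$ gives
$$
E^k:_{\overline F^k}\mathfrak m^\infty=\bigcap_{i=1}^s(E^k)_{u_i}=\bigcap_{i=1}^s(\overline E^k)_{u_i}=\overline E^k:_{\overline F^k}(\mathfrak m\overline R)^\infty,
$$
the last term being the saturation of $\overline E^k$ inside $\overline F^k$. Since $E^k\subseteq \overline E^k\subseteq E^k:_{\overline F^k}\mathfrak m^\infty$, the short exact sequence
$$
0\to \overline E^k/E^k\to E^k:_{\overline F^k}\mathfrak m^\infty/E^k\to \overline E^k:_{\overline F^k}(\mathfrak m\overline R)^\infty/\overline E^k\to 0
$$
splits the problem into the asymptotics of $\lambda_R(\overline E^k/E^k)$ and of $\lambda_R\big(H^0_{\mathfrak m\overline R}(\overline F^k/\overline E^k)\big)$.

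The first term is negligible. The graded module $\bigoplus_{k}\overline E^k/E^k=\overline R[\overline E]/R[E]$ is finitely generated over $R[E]$, because $\overline R[\overline E]$ is generated over $R[E]$ by the finite $R$-module $\overline R$ sitting in degree $0$; it is annihilated by $\mathfrak m^N$, hence is a finitely generated graded module over the standard graded algebra $R[E]/\mathfrak m^N R[E]$ over the Artinian ring $R/\mathfrak m^N$, whose Krull dimension is $\dim R[E]/\mathfrak m R[E]\le d+e-1$. By Hilbert--Serre, $\lambda_R(\overline E^k/E^k)$ is, for $k\gg 0$, a polynomial in $k$ of degree $\le d+e-2$, so $\lambda_R(\overline E^k/E^k)/k^{d+e-1}\to 0$.

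For the second term, $\sqrt{\mathfrak m\overline R}$ is the Jacobson radical $\mathfrak m_1\cap\cdots\cap\mathfrak m_\alpha$, so the $\mathfrak m\overline R$-torsion submodule of $\overline F^k/\overline E^k$, being supported on the finite discrete set $\{\mathfrak m_1,\dots,\mathfrak m_\alpha\}$, decomposes canonically as $H^0_{\mathfrak m\overline R}(\overline F^k/\overline E^k)=\bigoplus_{i=1}^\alpha H^0_{\mathfrak m_i}(\overline F^k/\overline E^k)$, and each summand is unchanged by localization at $\mathfrak m_i$. Localizing the symmetric algebra at $\mathfrak m_i$ identifies $\overline F^k_{\mathfrak m_i}$ with the $k$-th symmetric power $(\overline F_{\mathfrak m_i})^k$ of the free $\overline R_{\mathfrak m_i}$-module $\overline F_{\mathfrak m_i}$ of rank $\gamma$, and $\overline E^k_{\mathfrak m_i}$ with $(\overline E_{\mathfrak m_i})^k$ for the rank-$e$ submodule $\overline E_{\mathfrak m_i}\subseteq \overline F_{\mathfrak m_i}$. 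Now $\overline R_{\mathfrak m_i}$ is a local domain of depth $\ge 2$, of dimension $d$ (a module-finite extension of the domain $R$), essentially of finite type over $K$, with residue field finite over $R/\mathfrak m$ (hence algebraic over $K$ in the perfect-field case): so it satisfies the hypotheses of Theorem \ref{Theorem1*}, respectively of Remark \ref{charp} in positive characteristic. By (\ref{eqI'}) applied over $\overline R_{\mathfrak m_i}$ one has $H^0_{\mathfrak m_i}(\overline F^k/\overline E^k)\cong (\overline E_{\mathfrak m_i})^k:_{(\overline F_{\mathfrak m_i})^k}(\mathfrak m_i\overline R_{\mathfrak m_i})^\infty/(\overline E_{\mathfrak m_i})^k$, so Theorem \ref{Theorem1*} yields that $\lambda_{\overline R_{\mathfrak m_i}}\big(H^0_{\mathfrak m_i}(\overline F^k/\overline E^k)\big)/k^{d+e-1}$ converges. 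Since $\lambda_R=[\overline R/\mathfrak m_i:R/\mathfrak m]\cdot\lambda_{\overline R_{\mathfrak m_i}}$ on finite-length modules supported at $\mathfrak m_i$ and $[\overline R/\mathfrak m_i:R/\mathfrak m]<\infty$, also $\lambda_R\big(H^0_{\mathfrak m_i}(\overline F^k/\overline E^k)\big)/k^{d+e-1}$ converges. Summing over $i$ and adding the vanishing contribution of $\overline E^k/E^k$ gives the existence of the limit (\ref{*eq1}).

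The content is all in the preparatory facts: that $\overline R$ is module-finite over $R$ with $\overline R/R$ of finite length (so $\mathfrak m$ and $\mathfrak m\overline R$ have the same torsion and $\overline E^k/E^k$ contributes only in degree $\le d+e-2$) and that the $\overline R_{\mathfrak m_i}$ have depth $\ge 2$ --- exactly the reason for replacing $R$ by $\overline R$ --- both of which are available from the discussion above. I do not expect a genuine obstacle once the reduction is set up; the one step that needs care is the bookkeeping of localizing the module $E$ and its powers at the $\mathfrak m_i$ and converting lengths from $\overline R_{\mathfrak m_i}$ to $R$.
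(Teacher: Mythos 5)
Your proposal is correct and follows essentially the same route as the paper: the same short exact sequence $0\to\overline E^k/E^k\to E^k:_{\overline F^k}\mathfrak m^\infty/E^k\to \overline E^k:_{\overline F^k}\mathfrak m^\infty/\overline E^k\to 0$, the same Hilbert--Serre bound $\lambda_R(\overline E^k/E^k)=O(k^{d+e-2})$ via the finitely generated $\mathfrak m$-power-torsion module $\overline R[\overline E]/R[E]$, and the same decomposition of the saturation quotient over the maximal ideals $\mathfrak m_i$ of $\overline R$, where Theorem \ref{Theorem1*} (with Remark \ref{charp}) applies to the depth $\ge 2$ local rings $\overline R_{\mathfrak m_i}$ and lengths are converted by the factor $[\overline R/\mathfrak m_i:R/\mathfrak m]$. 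The only difference is cosmetic ordering of the steps, plus your explicit (and correct) remark that $\dim\overline R_{\mathfrak m_i}=d$, which the paper uses implicitly.
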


\begin{proof} Since
$\overline E^k:_{\overline F^k}\mathfrak m^{\infty}/\overline E^k$ are finitely generated $\mathfrak m\overline R$-torsion $\overline R$-modules, we have that
$$
\overline E^k:_{\overline F^k}\mathfrak m^{\infty}/\overline E^k\cong \bigoplus_{i=1}^{\alpha}\left(
\overline E_{\mathfrak m_i}^k:_{\overline F^k_{\mathfrak m_i}}\mathfrak m_i^{\infty}/\overline E^k_{\mathfrak m_i}\right).
$$

By Theorem \ref{ThmInt}, we have that
$$
\lim_{k\rightarrow \infty}
\frac{\lambda_{\overline R_{\mathfrak m_i}}\left(\overline E_{\mathfrak m_i}^k:_{\overline F_{\mathfrak m_i}^k}\mathfrak m_i^{\infty}/\overline E_{\mathfrak m_i}^k\right)}{k^{d+e-1}}
$$
exists for $1\le i\le \alpha$. Since for any $\overline R_{\mathfrak m_i}$ module $M$ we have that
$$
\lambda_R(M)=[\overline R/\mathfrak m_i:R/\mathfrak m]\lambda_{\overline R_{\mathfrak m_i}}(M),
$$
 we conclude that

\begin{equation}\label{eqK}
\lim_{k\rightarrow \infty}
\frac{\lambda_R(\overline E^k:_{\overline F^k}\mathfrak m^{\infty}/\overline E^k)}{k^{d+e-1}}
\end{equation}
exists.
We have
$$
\overline E^k:_{\overline F^k}\mathfrak m^{\infty}=\cap_{i=1}^s(\overline E^k)_{u_i}=\cap_{i=1}^s(E^k)_{u_i}
=E^k:_{\overline F^k}\mathfrak m^{\infty}.
$$
Consider the short exact sequences
\begin{equation}\label{eqL}
0\rightarrow \overline E^k/E^k\rightarrow E^k:_{\overline F^k}\mathfrak m^{\infty}/E^k\rightarrow
\overline E^k:_{\overline F^k}\mathfrak m^{\infty}/\overline E^k\rightarrow 0.
\end{equation}
Now $\overline R[\overline E]/R[E]$ is a finitely generated $R[E]$-module, and the support of the $R$-module
$\overline E^k/E^k$ is contained in $\{\mathfrak m\}$ for all $k$, so there exists a positive integer $r$ such that $\mathfrak m^r$ annihilates $\overline R[\overline E]/R[E]$. Thus (as in the argument following
equation (\ref{eq11*}) in the proof of Theorem \ref{Theorem1*}), we have that there exists a constant $\beta$ such that
\begin{equation}\label{eqT}
\lambda_R(\overline E^k/E^k)\le\beta k^{d+e-2}
\end{equation}
for all $k$. The conclusions of the proposition now follow from (\ref{eqK}), (\ref{eqT}) and (\ref{eqL}).
\end{proof}

I thank Craig Huneke, Bernd Ulrich and Javid Validashti for pointing out the following consequence of Theorem \ref{Theorem3*}.

\begin{Corollary}\label{Cor4*} Suppose that $(R,\mathfrak m)$ is a  local domain of
  dimension $d\ge 2$
 which is essentially of finite type over a field $K$ of characteristic zero
 (or over a perfect field $K$ such that $R/\mathfrak m$ is algebraic over $K$).
Suppose that $E$ is a rank $e$ submodule of a finitely generated free $R$-module $F$.
Suppose that $\gamma=\mbox{\rm rank}(F)<d+e$.
Then the limits
\begin{equation}\label{*eq2}
\lim_{k\rightarrow \infty}\frac{\lambda\left(E^k:_{F^k}\mathfrak m^{\infty}/E^k\right)}{k^{d+e-1}}\in \RR
\end{equation}
and
\begin{equation}\label{*eq3}
\lim_{k\rightarrow \infty}\frac{(d+e-1)!}{k^{d+e-1}}\lambda(H^0_{\mathfrak m}(F^k/E^k))\in \RR
\end{equation}
exist. In particular,  the epsilon multiplicity $\epsilon(E)$ of $E$ exists as a limit.
\end{Corollary}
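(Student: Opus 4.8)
The plan is to derive Corollary \ref{Cor4*} from Theorem \ref{Theorem3*} by comparing, for each $k$, the saturation of $E^k$ inside the free module $F^k$ with its saturation inside $\overline F^k=F^k\otimes_R\overline R$. The point is that these two saturations differ only by a submodule of $\overline F^k/F^k$, and the hypothesis $\gamma<d+e$ forces $\overline F^k/F^k$ to have length of order at most $k^{d+e-2}$, hence negligible after dividing by $k^{d+e-1}$.

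First I would record the reductions. By the isomorphism (\ref{eqI}), $H^0_{\mathfrak m}(F^k/E^k)\cong E^k:_{F^k}\mathfrak m^{\infty}/E^k$ for every $k$, so the limit in (\ref{*eq3}) is $(d+e-1)!$ times the limit in (\ref{*eq2}); thus it is enough to prove that (\ref{*eq2}) exists. Once this is known, the final assertion is automatic: the epsilon multiplicity $\epsilon(E)$ is by definition (see \cite{UV}) the $\limsup$ of the sequence occurring in (\ref{*eq3}), so the existence of the limit identifies $\epsilon(E)$ with it.

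Next comes the comparison step. Since $F^k\subseteq\overline F^k$, one has $E^k:_{F^k}\mathfrak m^{\infty}=(E^k:_{\overline F^k}\mathfrak m^{\infty})\cap F^k$ (this is already contained in (\ref{eqI})), whence a short exact sequence of $R$-modules
$$
0\to \frac{E^k:_{F^k}\mathfrak m^{\infty}}{E^k}\to \frac{E^k:_{\overline F^k}\mathfrak m^{\infty}}{E^k}\to \frac{E^k:_{\overline F^k}\mathfrak m^{\infty}}{E^k:_{F^k}\mathfrak m^{\infty}}\to 0.
$$
The third module embeds in $\overline F^k/F^k$. Tensoring $0\to R\to\overline R\to\overline R/R\to 0$ with the free module $F^k$ gives $\overline F^k/F^k\cong F^k\otimes_R(\overline R/R)$, so if $r_k$ denotes the rank of $F^k$ (a polynomial in $k$ of degree $\gamma-1$) then
$$
\lambda_R\!\left(\frac{E^k:_{\overline F^k}\mathfrak m^{\infty}}{E^k:_{F^k}\mathfrak m^{\infty}}\right)\le \lambda_R(\overline F^k/F^k)=r_k\,\lambda_R(\overline R/R),
$$
which is finite by (\ref{eqS}) and, since $\gamma-1\le d+e-2$, bounded by a constant times $k^{d+e-2}$.

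Finally, I would divide the displayed short exact sequence by $k^{d+e-1}$ and let $k\to\infty$: the third term tends to $0$ by the estimate just obtained, while the middle term has a limit by Theorem \ref{Theorem3*}. Therefore the first term has a limit, which is exactly (\ref{*eq2}), and together with the reductions above this yields (\ref{*eq3}) and the statement on $\epsilon(E)$. I do not expect a genuine obstacle here once Theorem \ref{Theorem3*} is in hand; the only point requiring attention is the degree count $\gamma-1\le d+e-2$, which is the sole place the hypothesis $\gamma=\mbox{rank}(F)<d+e$ is used, and without which $\overline F^k/F^k$ could contribute a term of order $k^{d+e-1}$ and invalidate the comparison.
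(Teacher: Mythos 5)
Your argument is correct and is essentially the paper's own proof: the same short exact sequence comparing $E^k:_{F^k}\mathfrak m^{\infty}$ with $E^k:_{\overline F^k}\mathfrak m^{\infty}$, the same embedding of the quotient into $\overline F^k/F^k\cong(\overline R/R)^{\binom{k+\gamma-1}{\gamma-1}}$, the same use of $\gamma-1\le d+e-2$ to make that term negligible, and the same appeal to Theorem \ref{Theorem3*} and (\ref{eqI}) to conclude. Nothing further is needed.
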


\begin{proof} 
We will establish that the limit (\ref{*eq2}) exists.
We have exact sequences
\begin{equation}\label{eqA}
0\rightarrow 
E^k:_{F^k}\mathfrak m^{\infty}/E^k\rightarrow E^k:_{\overline F^k}\mathfrak m^{\infty}/E^k
\rightarrow 
E^k:_{\overline F^k}\mathfrak m^{\infty}/E^k:_{F^k}\mathfrak m^{\infty}\rightarrow 0
\end{equation}
and inclusions
\begin{equation}\label{eqB}
E^k:_{\overline F^k}\mathfrak m^{\infty}/E^k:_{F^k}\mathfrak m^{\infty}=E^k:_{\overline F^k}\mathfrak m^{\infty}/\left((E^k:_{\overline F^k}\mathfrak m^{\infty})\cap F^k\right)
\rightarrow F^k:_{\overline F^k}\mathfrak m^{\infty}/F^k
\end{equation}
for $k\ge 0$.

We  have
\begin{equation}\label{eqP}
F^k:_{\overline F^k}\mathfrak m^{\infty}/F^k=\overline F^k/F^k\cong (\overline R/R)^{\binom{k+\gamma-1}{\gamma-1}}.
\end{equation}
Since $\gamma=\mbox{rank}(F)<d+e$, we have
$$
\lim_{k\rightarrow\infty}\frac{\lambda_R \left(E^k:_{\overline F^k}\mathfrak m^{\infty}/E^k:_{F^k}\mathfrak m^{\infty} \right)}{k^{d+e-1}}=0.
$$
The existence of the limit (\ref{*eq2})  now follows from  (\ref{eqA}) and  Theorem \ref{Theorem3*}.
The existence of the limit (\ref{*eq3}) is  immediate from (\ref{*eq2})
and (\ref{eqI}).
\end{proof}

\end{document}